\newtheorem{theorem}{Theorem}[section]
\newaliascnt{lemma}{theorem}
\newtheorem{lemma}[lemma]{Lemma}
\newaliascnt{corollary}{theorem}
\newtheorem{corollary}[corollary]{Corollary}
\newaliascnt{proposition}{theorem}
\newtheorem{proposition}[proposition]{Proposition}
\newaliascnt{potato}{theorem}
\newaliascnt{definitionlemma}{theorem}
\newaliascnt{conjecture}{theorem}
\newaliascnt{question}{theorem}
\theoremstyle{definition}
\newaliascnt{definition}{theorem}
\newaliascnt{remark}{theorem}
\newtheorem{remark}[remark]{Remark}
\newaliascnt{example}{theorem}
\newenvironment{example}
{\pushQED{\qed}\examplex}
{\popQED\endexamplex}
\newaliascnt{notation}{theorem}
\newcommand{\spref}[1]{\href{https://stacks.math.columbia.edu/tag/#1}{#1}}
\newcommand{\cB}{\mathcal B}
\newcommand{\cC}{\mathcal C}
\newcommand{\cD}{\mathcal D}
\newcommand{\cG}{\mathcal G}
\newcommand{\cI}{\mathcal I}
\newcommand{\cL}{\mathcal L}
\newcommand{\cM}{\mathcal M}
\newcommand{\cN}{\mathcal N}
\newcommand{\cO}{\mathcal O}
\newcommand{\cP}{\mathcal P}
\newcommand{\cU}{\mathcal U}
\newcommand{\cV}{\mathcal V}
\newcommand{\cX}{\mathcal X}
\newcommand{\cY}{\mathcal Y}
\newcommand{\cZ}{\mathcal Z}
\newcommand{\fkm}{\mathfrak m}
\newcommand{\fkp}{\mathfrak p}
\newcommand{\bA}{\mathbb A}
\newcommand{\bG}{\mathbb G}
\newcommand{\bP}{\mathbb P}
\newcommand{\bZ}{\mathbb Z}
\newcommand{\td}{\widetilde}
\newcommand{\ov}{\overline}
\DeclareMathOperator{\charac}{char}
\DeclareMathOperator{\codim}{codim}
\DeclareMathOperator{\colim}{colim}
\DeclareMathOperator{\sh}{sh}
\DeclareMathOperator{\Frac}{Frac}
\DeclareMathOperator{\Mor}{Mor}
\DeclareMathOperator{\Pic}{Pic}
\DeclareMathOperator{\Spec}{Spec}
\begin{document}
	
	\title{Resolution of indeterminacy of rational maps to proper tame stacks}
	
    \author{Myeong Jae Jeon}
	\thanks{}
	\address[MJJ]{Department of Mathematics, University of Maryland, College Park MD 20742, USA}
	\email{mjjeon@umd.edu}
	
	\date{\today}
	\keywords{}
	\subjclass[2025]{}

	\begin{abstract} 
        We show the resolution of indeterminacy of rational maps from a regular surface to a tame stack locally of finite type over an excellent scheme. The proof uses the valuative criterion for proper tame morphisms, which was proved by Bresciani and Vistoli, together with the resolution of singularities for excellent surfaces and the root stack construction. Using Hironaka’s results on the resolution of singularities over fields of characteristic zero, we extend the result to rational maps from a regular scheme of arbitrary dimension to a tame stack locally of finite type over a field of characteristic zero. We also provide a Purity Lemma for higher dimensional tame stacks, generalizing results of Abramovich, Olsson, and Vistoli, which also plays an essential role in the proof.
	\end{abstract}
	\maketitle
	
	\numberwithin{theorem}{section}
	\numberwithin{lemma}{section}
	\numberwithin{corollary}{section}
	\numberwithin{proposition}{section}
	\numberwithin{conjecture}{section}
	\numberwithin{question}{section}
	\numberwithin{remark}{section}
	\numberwithin{definition}{section}
	\numberwithin{example}{section}
	\numberwithin{notation}{section}
	\numberwithin{equation}{section}
    
	\section{Introduction}
	\label{sec:intro}
    Under mild assumptions, a rational map $\varphi:X \dashrightarrow Y$ of schemes has a largest domain of definition $U$ on which $\varphi:U \dashrightarrow Y$ is a morphism. With abuse of notation, we will also denote the morphism $U \to Y$ from the domain of definition, by $\varphi$. A natural question then arises is, given a rational map $\varphi:X \dashrightarrow Y$ with $U$ the domain of definition, whether there exists a morphism $\td{\varphi}: X \to Y$ that extends $\varphi:U \to Y$, i.e., $\td{\varphi}|_U = \varphi$. The answer is not positive in general, but we expect the existence of such an extension after a certain base change $X' \to X$. This problem is called the \emph{resolution of indeterminacy} of rational maps, and our natural hope is to have good control over $X'$ and the extension $X' \to M$. The main result of this note is that we can resolve the indeterminacy of rational maps from a regular surface to a proper tame stack.

    \begin{theorem}\label{thm:main} Let $f:\cM \to \cN$ be a tame, proper morphism of algebraic stacks locally of finite type over an excellent scheme $S$, with $\cM$ separated over $S$. Let $X$ be a regular separated scheme of dimension $2$, locally of finite type over $S$, and $U \subseteq X$ a dense open subscheme of $X$. Suppose we have a commutative diagram
            \begin{center}
                \begin{tikzcd}
                    U \arrow[r] \arrow[d,hook]& \cM \arrow[d,"f"] \\
                    X \arrow[r]& \cN
                \end{tikzcd}
            \end{center}
        Then, there exists a regular algebraic stack $\cX$ of dimension $2$, locally of finite type over $S$, and a birational morphism $\cX \to X$ which is an isomorphism over $U$, and an extension $\cX \to \cM$ making the diagram
            \begin{center}
                \begin{tikzcd}
                   & U \arrow[ld,hook] \arrow[r] \arrow[d,hook]& \cM \arrow[d,"f"] \\
                \cX \arrow[r] \arrow[rru,dashed] & X \arrow[r]& \cN
                \end{tikzcd}
            \end{center}
        commute. Moreover, the morphism $\cX \to X$ factors as
        \begin{align*}
            \cX = \sqrt[\bm{r}]{\bm{D}/X'} \to X' \to X
        \end{align*}
        where $X' \to X$ is a proper birational morphism with a simple normal crossings divisor $X' \setminus U = \cup_{i=1}^N D_i$, and $\sqrt[\bm{r}]{\bm{D}/X'} \to X'$ is a root stack morphism with $\bm{D} = (D_1,\cdots,D_N)$ and the unique minimal $N$-tuple of positive integers $\bm{r}=(r_1,\cdots,r_N)$. Moreover, $X' \to X$ can be factored into a finite sequence of blow-ups at a closed point if the coarse moduli space of $\cM \times_{\cN} X$ is projective.
        \end{theorem}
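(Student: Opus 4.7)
The plan combines three ingredients: resolution of singularities for excellent surfaces (Lipman), the Bresciani--Vistoli valuative criterion for tame proper morphisms, and the generalized Purity Lemma established earlier in this paper. First, I apply resolution of singularities for excellent surfaces, together with any further blow-ups needed to separate divisor components and their self-intersections, to produce a proper birational morphism $X' \to X$, isomorphism over $U$, with $X' \setminus U = \bigcup_{i=1}^N D_i$ a simple normal crossings divisor. Pulling the given square back to $X'$ reduces the problem to the case $X = X'$.

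For each component $D_i$, let $R_i = \cO_{X, \eta_i}$ be the discrete valuation ring at the generic point of $D_i$, with fraction field $K_i$. The pair of arrows $\Spec K_i \to U \to \cM$ and $\Spec R_i \to X \to \cN$ presents a lifting problem for the proper tame morphism $f$; the Bresciani--Vistoli valuative criterion then yields a unique minimal positive integer $r_i$ such that the map extends to $\Spec R_i' \to \cM$ after the totally, tamely ramified extension $R_i \to R_i'$ obtained by adjoining an $r_i$-th root of a uniformizer. Setting $\bm{r}=(r_1,\dots,r_N)$, form the root stack $\cX := \sqrt[\bm{r}]{\bm{D}/X}$. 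Since $X$ is regular and $\bm{D}$ is SNC, $\cX$ is a regular tame algebraic stack, and the universal property of root stacks turns the equivariant extensions just produced into a morphism $\cV \to \cM$ defined on an open substack $\cV \subseteq \cX$ whose complement has codimension $\ge 2$: in our two-dimensional setting, a finite collection of closed points lying over the crossings of $\bm{D}$.

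The main remaining obstacle, and the technical heart of the argument, is extending the morphism over this finite set of closed points; this is precisely what the generalized Purity Lemma for higher-dimensional tame stacks is designed to accomplish. Applying it to $\cV \hookrightarrow \cX$ and the target $\cM$ yields the sought extension $\cX \to \cM$, and the minimality of each $r_i$ combined with the uniqueness of the extension pins down $\bm{r}$ as the unique minimal tuple.

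Finally, for the moreover clause, when the coarse moduli space of $\cM \times_\cN X$ is projective, the induced rational map from $X$ to this projective surface is resolved by a finite sequence of blow-ups at closed points, using Zariski's classical factorization theorem for birational morphisms of smooth projective surfaces. Blow-ups at closed points of a regular two-dimensional scheme remain regular, and we may arrange that this sequence simultaneously achieves the SNC condition required above; this provides the claimed factorization of $X' \to X$ into successive blow-ups at closed points.
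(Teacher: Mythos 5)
There is a genuine gap, and it sits exactly where you place the ``technical heart'' of the argument. You choose $X'\to X$ only so that $X'\setminus U$ becomes simple normal crossings, and at the end you propose to apply the Purity Lemma ``to $\cV\hookrightarrow\cX$ and the target $\cM$.'' But the Purity Lemma (Lemma \ref{lem:purity-tame-stack}) requires the target to be a tame stack with coarse moduli space $M$ \emph{and} a morphism $\cX\to M$ already defined across the points you want to fill in; tameness of the morphism $f\colon\cM\to\cN$ does not make $\cM$ tame, and the only datum defined on all of $\cX$ in your setup is $\cX\to X\to\cN$. This is not a formality: take $\cM=M=\bP^1$, $\cN=S=\Spec k$, $U=X\setminus\{p\}$, and $U\to\bP^1$ a pencil whose base point at $p$ needs several infinitely near blow-ups. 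A single blow-up makes $X'\setminus U$ an SNC divisor, the valuative criterion along the exceptional divisor gives $r=1$, and the residual indeterminacy sits at finitely many closed points of $X'$ --- precisely where your purity step is supposed to conclude, yet no extension exists over that model. The paper avoids this by first base changing to $\cM_X=\cM\times_\cN X$ (which \emph{is} tame, with coarse space $M_X$ proper over $X$), taking the closure of the graph of $X\dashrightarrow M_X$, and resolving that graph; the resulting surface carries an honest morphism to $M_X$, and it is against this coarse-level morphism that the valuative criterion and the Purity Lemma are applied. The blow-ups forced by the coarse-level map, as opposed to those needed for the boundary, are absent from your construction of $X'$, so your purity step cannot go through as stated.

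A second, more technical gap is the passage from the valuative criterion to the open substack $\cV$. The Bresciani--Vistoli theorem produces the lift only over the root stack of $\Spec\cO_{X',\eta_i}$; obtaining a morphism $\sqrt[r_i]{(D_i\cap V_i)/V_i}\to\cM$ on a Zariski neighborhood $V_i$ of $\eta_i$ is a spreading-out statement for morphisms out of a limit of \emph{stacks}, not a consequence of the universal property of root stacks. Because the sources are quotient stacks rather than affine schemes, this needs an argument; the paper proves exactly this in Lemma \ref{lem:quotient-stack-limit} and Lemma \ref{lem:loc-fp} (and handles the crossing points through the local Lemma \ref{lem:local} before invoking purity), and uses Proposition \ref{prop:separated} to glue the local lifts and obtain uniqueness. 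Finally, in the projectivity clause the coarse space of $\cM\times_\cN X$ need not be a surface, and the schemes at hand are excellent rather than varieties over a field, so the classical Zariski factorization for smooth projective surfaces should be replaced by the factorization of proper birational morphisms of regular $2$-dimensional schemes applied to (a resolution of) the graph, as in the paper's reduction step.
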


    Theorem \ref{thm:main} and its proof were motivated by the following results on the resolution of indeterminacy of rational maps from regular curves. When $X$ is regular and of dimension $1$, we have the well-known \emph{valuative criterion for properness}, which says that, if a morphism $f:M \to N$ of schemes is proper, then given a commutative solid diagram
    \begin{center}
        \begin{tikzcd}
            \Spec K \arrow[r] \arrow[d,hook]& M \arrow[d] \\
            \Spec R \arrow[r] \ar[ru,dashed]& N
        \end{tikzcd}
    \end{center}
    where $R$ is a DVR with quotient field $K$, there exists a unique morphism $\Spec R \to M$ making the diagram commute. 
    
    Using the valuative criterion for properness together with the \emph{spreading out} argument, we can resolve the indeterminacy of rational maps from a regular curve as follows. Let $X \dashrightarrow M$ be a rational map from a complete, irreducible, nonsingular curve $X$ to a proper scheme $M$ where both $X$ and $M$ are over an algebraically closed field $k$. If $U$ is the domain of definition, $X \setminus U$ consists of finitely many closed points, say $p_1,\cdots,p_k$. For each $p_i$, we have a morphism $\Spec \cO_{X,p_i} \to M$ extending $\Spec \cO_{X,\eta} = \Spec K \to M$ by the valuative criterion where $\eta$ is the generic point of $X$. After spreading out each $\Spec \cO_{X,p_i} \to M$ to a morphism $V_i \to M$ from an open affine neighborhood $V_i$ of $p_i$ (for example, see \cite[Proposition 8.14.2]{EGAIV-3} or \cite[Tag \spref{01ZC}]{stacks-project}), we can glue these morphisms to a morphism $X \to M$ extending the given $U \to M$. 
    
	However, for rational maps to an algebraic space or algebraic stack, this approach is not sufficient. Indeed, the valuative criterion for proper morphisms of algebraic stacks (for example, \cite[Theorem 3.8.7]{AlperStacks}, \cite[Tag \spref{0CLZ}]{stacks-project}) says that for a proper morphism of algebraic stacks $\cM \to \cN$, we only have a lifting $\Spec R' \to \cM$ of the composition $\Spec R' \to \Spec R \to \cN$ extending the morphism $\Spec K' \to \Spec K \to \cM$ where $R \to R'$ is an extension of DVRs and $K' = \Frac R'$. Moreover, such an extension of DVRs is necessary as remarked in \cite[Example 3.8.16]{AlperStacks}. Even for proper morphisms of algebraic spaces, we need an extension of DVRs $R' \to R$ \cite[Tag \spref{0A40}]{stacks-project}.
   
    The construction in the valuative criterion for algebraic stacks is not satisfactory, since $\Spec R' \to \Spec R$ can never be a birational morphism unless $K = K'$ and the extension $R \to R'$ typically induces a non-trivial extension of residue fields as remarked in \cite[\S 1]{BrescianiVistoliValCrit}. In \cite{BrescianiVistoliValCrit}, a more suitable extension in this context, avoiding an extension of DVRs, was provided by considering the notion of \emph{tame stacks}. In the sense of \cite{AbramovichOlssonVistoliTame}, tame stacks are algebraic stacks whose inertia stack is finite and has linearly reductive geometric fibers. More generally, a morphism $f: \cM \to \cN$ of algebraic stacks is \emph{tame} if the relative inertia stack $\cI_{\cM/\cN} \to \cM$ is finite and has linearly reductive geometric fibers (see Section \ref{sec:notations}). They showed in \cite[Theorem 3.1]{BrescianiVistoliValCrit} that there exists a representable morphism $\sqrt[n]{\Spec R} \to \cM$ which makes the diagram commute, with $n$ minimal, where $\sqrt[n]{\Spec R}$ is a \emph{root stack} along the divisor defined by a uniformizer of $R$, which will be defined later in Section \ref{sec:root-stacks}. The advantage of this construction is that the root stack morphism $\sqrt[n]{\Spec R} \to \Spec R$ is a coarse moduli space and, in particular, it is a proper birational morphism. Here, we say that a morphism $\cM \to \cN$ is \emph{representable} if for any morphism $T \to \cN$ from a scheme $T$, the fiber product $\cM \times_\cN T$ is an algebraic space. 

    In particular, if $f:M \to N$ is a proper morphism of algebraic spaces from a separated algebraic space $M$, and $X \dashrightarrow M$ is a rational map over $N$ from a regular curve $X$ with indeterminacy locus $\{p_1,\cdots,p_k\}$, we can extend it to a morphism $X \to M$ as in the case where $f$ is a morphism of schemes. Indeed, the valuative criterion for proper tame morphisms \cite[Theorem 3.1]{BrescianiVistoliValCrit} gives an extension $\sqrt[n]{\Spec \cO_{X,p_i}} \to M$ for some $n$, but $n = 1$ since this map is representable, and thus we get a morphism $\Spec \cO_{X,p_i} \to M$ for each $i=1,\cdots,k$. Then, by a similar characterization for morphisms locally of finite presentation between algebraic spaces (for example, see \cite[Tag \spref{04AK}]{stacks-project}), we can spread out each of these morphisms to a morphism from an open neighborhood of $p_i$ and glue them to get a morphism $X \to M$. 

     Moreover, using the valuative criterion for proper tame morphisms and the spreading out argument based on Lemma \ref{lem:loc-fp}, we can also resolve the indeterminacy of a rational map from a regular curve to a proper tame stack, after taking a root stack of the given regular curve. However, if $X$ is a surface, we cannot expect such a resolution even if $X$ is regular and the target is a scheme. For example, if $\td{X}$ is a blow-up of $X$ at a closed point, $\td{X}$ is birational to $X$ so that we have a rational map $X \dashrightarrow \td{X}$, but we can never obtain a morphism $X \to \td{X}$. However, in general, given a rational map $X \dashrightarrow M$ where $M$ is proper, we can construct an extension $X' \to M$, where $X'$ is equipped with a proper birational map $X' \to X$ by considering the graph of the given rational map. If we further assume that $M$ is projective, then the birational map $X' \to X$ is also projective and thus we can even describe $X'$ as a blow-up of $X$. 

    Motivated by these observations, we resolved the indeterminacy of a rational map $X \dashrightarrow \cM$ from a regular surface $X$ to a proper tame stack $\cM$ by first resolving the singularities of indeterminacy locus and then taking a root stack $\cX \to X'$. Accordingly, base changes to $X'$ and then to $\cX$ in Theorem \ref{thm:main} are necessary if we consider rational maps from $X$ to its blow-up as above, and a root stack \cite[Example 3.8.17]{AlperStacks}, respectively. The tameness assumption is necessary as well by \cite[Example 3.3]{BrescianiVistoliValCrit}, if we precompose the rational map $X \dashrightarrow \cM$ with an inclusion from a nonsingular curve on $X$ which is not contained in $U$. 

    As a special case of Theorem \ref{thm:main}, we can consider the case where $f:\cM \to \cN$ is a coarse moduli space. Indeed, if we replace $f:\cM \to \cN$ by a coarse moduli space, we can also prove a similar result for $X$ a threefold.
    \begin{theorem}\label{thm:main-threefold} Let $\cM$ be a separated tame stack locally of finite type over an excellent scheme $S$, and $f:\cM \to M$ a coarse moduli space. Let $X$ be a regular separated scheme of dimension $3$, locally of finite type over $S$, and $U \subseteq X$ a dense open subscheme of $X$. Suppose we have a commutative diagram
            \begin{center}
                \begin{tikzcd}
                    U \arrow[r] \arrow[d,hook]& \cM \arrow[d,"f"] \\
                    X \arrow[r]& M
                \end{tikzcd}
            \end{center}
        Then, there exists a regular algebraic stack $\cX$ of dimension $3$, locally of finite type over $S$, and a birational morphism $\cX \to X$ which is an isomorphism over $U$, and an extension $\cX \to \cM$ making the diagram
            \begin{center}
                \begin{tikzcd}
                   & U \arrow[ld,hook] \arrow[r] \arrow[d,hook]& \cM \arrow[d,"f"] \\
                \cX \arrow[r] \arrow[rru,dashed] & X \arrow[r]& M
                \end{tikzcd}
            \end{center}
        commute. Moreover, the morphism $\cX \to X$ factors as
        \begin{align*}
            \cX = \sqrt[\bm{r}]{\bm{D}/X'} \to X' \to X
        \end{align*}
        where $X' \to X$ is a projective birational morphism with a simple normal crossings divisor $X' \setminus U = \cup_{i=1}^N D_i$, and $\sqrt[\bm{r}]{\bm{D}/X'} \to X'$ is a root stack morphism with $\bm{D} = (D_1,\cdots,D_N)$ and the unique minimal $N$-tuple of positive integers $\bm{r} = (r_1,\cdots,r_N)$. Moreover, $X' \to X$ can be factored into a finite sequence of blow-ups at a closed point.
        \end{theorem}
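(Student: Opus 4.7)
The plan is to follow the strategy of Theorem \ref{thm:main}, adapted to the three-dimensional setting and exploiting the fact that $f \colon \cM \to M$ is a coarse moduli map. The main tools are the Purity Lemma for higher-dimensional tame stacks established earlier in the paper, resolution of singularities for excellent three-folds (Cossart--Piltant in positive and mixed characteristic, or Hironaka in characteristic zero), the Bresciani--Vistoli valuative criterion for proper tame morphisms \cite[Theorem 3.1]{BrescianiVistoliValCrit}, and the root stack construction.

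First, I would replace $U$ by the maximal open subscheme on which $X \dashrightarrow \cM$ is defined. The Purity Lemma then forces $X \setminus U$ to be pure of codimension one, i.e., a reduced divisor $D \subseteq X$. Next, resolution of singularities applied to the pair $(X, D)$ yields a projective birational morphism $X' \to X$, isomorphic over $U$, such that $X' \setminus U$ is a simple normal crossings divisor $\bigcup_{i=1}^N D_i$. To arrange that $X' \to X$ is in fact a composition of blow-ups at closed points, I would use that $M$ is an algebraic space and that $X \to M$ is already a morphism: the scheme-theoretic graph of the rational section $U \to \cM \times_M X$ inside $\cM \times_M X$ has a projective closure over $X$, which upon further desingularization factors through iterated point blow-ups by an argument parallel to the final assertion in Theorem \ref{thm:main}.

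With $(X', \bm{D})$ fixed, I would apply the Bresciani--Vistoli valuative criterion at the generic point of each $D_i$, which furnishes a DVR, to obtain the unique minimal positive integer $r_i$ such that the rational map $X' \dashrightarrow \cM$ extends across that generic point after taking an $r_i$-th root along $D_i$. Setting $\bm{r} = (r_1, \ldots, r_N)$ and $\cX = \sqrt[\bm{r}]{\bm{D}/X'}$, the stack $\cX$ is regular and three-dimensional, locally of finite type over $S$. To upgrade the construction from an extension across generic points of the $D_i$ to a morphism $\cX \to \cM$ on all of $\cX$, I would invoke the Purity Lemma on $\cX$: the remaining indeterminacy can lie only over intersections of the $D_i$, which are of codimension at least two in the regular stack $\cX$ and hence are removed by purity.

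The step I expect to be the main obstacle is the refinement that $X' \to X$ can be factored as a sequence of blow-ups at closed points. In dimension three, resolution of an embedded divisor generally requires blow-ups along smooth curves as well, and converting these into point blow-ups is subtle. The key leverage here is that $M$ is an algebraic space: this makes the graph of $X \dashrightarrow \cM$ projective over $X$, so that Zariski-type factorization theorems for projective birational morphisms between regular three-folds become applicable, playing the role that the projectivity hypothesis on the coarse moduli space plays in Theorem \ref{thm:main}.
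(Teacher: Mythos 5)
Your overall skeleton---purity to make the indeterminacy locus divisorial, a log resolution to make it simple normal crossings, minimal roots $r_i$ extracted from the Bresciani--Vistoli criterion at the generic points of the $D_i$, then purity on the root stack---is the paper's strategy, but two of your supporting steps do not hold up. First, the resolution input: you appeal to Cossart--Piltant resolution of excellent threefolds (or Hironaka in characteristic $0$), but what is needed is an \emph{embedded} resolution making $\pi^{-1}(X\setminus U)$ a simple normal crossings divisor inside the already regular threefold $X$, and the Cossart--Piltant theorem is a non-embedded statement that does not provide this in positive or mixed characteristic. The point the paper exploits---and the reason the theorem holds in dimension $3$ over an arbitrary excellent base---is that the indeterminacy locus has dimension $\le 2$, so the embedded resolution results of \cite{CossartJannsenSaito} for two-dimensional excellent schemes apply; the factorization of $X'\to X$ into blow-ups also comes from the CJS algorithm itself, which proceeds by blow-ups in permissible centers. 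Your alternative route to the blow-up factorization---taking the graph of $U\to\cM\times_M X$, asserting its closure is projective over $X$, and invoking ``Zariski-type factorization for projective birational morphisms between regular three-folds''---fails: no such factorization theorem exists in dimension $3$ (the blow-up of a regular curve in a regular threefold is projective, birational, between regular schemes, and is not a composition of point blow-ups), and projectivity of the graph closure is not automatic (in Theorem \ref{thm:main} it has to be assumed). Note also that since $f\colon\cM\to M$ is a coarse space and $X\to M$ is already given, no graph construction is needed at all; that is precisely why the reduction step of Section \ref{subsec:proof-reductions} can be skipped here.

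Second, there is a gap between ``the map extends across the generic point of $D_i$ after an $r_i$-th root'' and ``the remaining indeterminacy has codimension $\ge 2$, so purity applies.'' The valuative criterion of \cite[Theorem 3.1]{BrescianiVistoliValCrit} only produces a morphism out of the root stack of $\Spec\cO_{X',\eta_i}$; to feed the Purity Lemma you need an honest open substack of $\cX$ on which the lift is defined, so you must spread these local lifts out to open neighborhoods of the $\eta_i$ and glue them with the lift over $U$. Since the source is a root stack rather than an affine scheme, this is not the standard limit argument: it is Lemma \ref{lem:loc-fp} together with Remark \ref{rmk:root-stack-as-a-limit}, and the compatibility of the glued maps uses Proposition \ref{prop:separated}. (The paper additionally constructs lifts at the generic points of the intersection strata of the $D_i$ via Lemma \ref{lem:local} before invoking Lemma \ref{lem:purity-tame-stack}; your leaner version, extending only near the $\eta_i$ and leaving the rest to purity, is fine once the spreading out is in place, since the undefined locus then has codimension $\ge 2$---it need not be contained in the intersections of the $D_i$, but that is irrelevant for purity.)
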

    
    Moreover, the conditions on $X$ in Theorem \ref{thm:main} are assumed to apply the resolution of singularities for excellent surfaces \cite{CossartJannsenSaito}. However, by Hironaka \cite{Hironaka}, we have the resolution of singularities for schemes of arbitrary dimension over a field of characteristic 0. Therefore, we obtain the following result for rational maps from regular schemes of arbitrary dimension.

     \begin{theorem}\label{thm:main-higher-dim} Let $f:\cM \to \cN$ be a tame, proper morphism of algebraic stacks locally of finite type over a field $k$ of characteristic $0$, with $\cM$ separated over $k$. Let $X$ be a regular separated scheme of dimension $n \ge 2$, locally of finite type over $k$, and $U \subseteq X$ a dense open subscheme of $X$. Suppose we have a commutative diagram
            \begin{center}
                \begin{tikzcd}
                    U \arrow[r] \arrow[d,hook]& \cM \arrow[d,"f"] \\
                    X \arrow[r]& \cN
                \end{tikzcd}
            \end{center}
        Then, there exists a regular algebraic stack $\cX$ of dimension $n$, locally of finite type over $k$, and a birational morphism $\cX \to X$ which is an isomorphism over $U$, and an extension $\cX \to \cM$ making the diagram
            \begin{center}
                \begin{tikzcd}
                   & U \arrow[ld,hook] \arrow[r] \arrow[d,hook]& \cM \arrow[d,"f"] \\
                \cX \arrow[r] \arrow[rru,dashed] & X \arrow[r]& \cN
                \end{tikzcd}
            \end{center}
        commute. Moreover, the morphism $\cX \to X$ factors as
        \begin{align*}
            \cX = \sqrt[\bm{r}]{\bm{D}/X'} \to X' \to X
        \end{align*}
        where $X' \to X$ is a proper birational morphism with a simple normal crossings divisor $X' \setminus U = \cup_{i=1}^N D_i$, and $\sqrt[\bm{r}]{\bm{D}/X'} \to X'$ is a root stack morphism with $\bm{D} = (D_1,\cdots,D_N)$ and the unique minimal $N$-tuple of positive integers $\bm{r} = (r_1,\cdots,r_N)$. Moreover, $X' \to X$ can be factored into a finite sequence of blow-ups at a closed point if the coarse moduli space of $\cM \times_{\cN} X$ is projective.
    \end{theorem}

    After posting the first version of this preprint, we became aware of the recent work of Rydh \cite{RydhFlatification}, which independently obtains a similar resolution result for the characteristic zero case. In \cite[Theorem 3.3]{RydhFlatification}, it is shown that for a rational map from a smooth scheme to a Deligne-Mumford stack in characteristic zero, the indeterminacy can be resolved by a functorial sequence of Kummer blow-ups (blow-ups followed by root stacks along the exceptional divisors). While our Theorem \ref{thm:main-higher-dim} provides an explicit construction via root stacks in the same setting, our main result (Theorem \ref{thm:main}) addresses the case for surfaces in arbitrary characteristic (including mixed characteristic), which is distinct from the characteristic zero methods.

    \begin{remark}
        In Theorems \ref{thm:main}, \ref{thm:main-threefold}, and \ref{thm:main-higher-dim}, once we find a stack $\cX = \sqrt[\bm{r}]{\bm{D}/X'}$ with morphisms $\cX \to X$ and $\cX \to \cM$, we can construct a proper birational map $\cX' = \sqrt[\bm{r'}]{\bm{D}/X'} \to \cX$ for any $N$-tuple of positive integers $\bm{r'} = (r_1',\cdots,r_N')$ with $r_i| r_i'$, $i=1,\cdots,N$. Hence, a regular stack $\cX$ filling in the diagram may not be unique. However, we can choose minimal positive integers $r_1,\cdots,r_N$ so that the morphism $\cX \to \cM$ is representable away from the locus where the pullbacks of $D_i$'s to $\cX$ intersect by \cite[Theorem 3.1]{BrescianiVistoliValCrit}. The stack $\cX$ and the morphisms $\cX \to X$ and $\cX \to \cM$ are unique in this sense.

        The morphism $\cX \to \cM$ is not representable in general. If we need a representable map, we may take relative coarse moduli space $\cY \to \cM$ of $\cX \to \cM$ to obtain a representable map (for example, see \cite[Section 3]{AbramovichOlssonVistoliTwisted}). However, the relative coarse moduli space is not regular in general. Therefore, we may have to choose between the representability of the lifting $\cX \to \cM$ and the regularity of the source stack $\cX$, see Example \ref{ex:family-of-elliptic-curves}.
    \end{remark}

    The advantage of our main result, Theorem \ref{thm:main}, is mainly threefold. Firstly, our proof works under sufficiently general assumptions, particularly in arbitrary characteristics, that all schemes and stacks are locally of finite type over an excellent scheme, and algebraic stacks have finite inertia so that their coarse moduli spaces exist. However, such assumptions are achieved in most of the schemes and stacks that appear in practice. Secondly, we have constructed the extension $\cX \to \cM$ with the minimal necessary base changes. Lastly, we have good control over the extension $\cX \to \cM$, in the sense that $\cX$ is also regular and birational to the given regular surface $X$, and the extension is unique up to a unique isomorphism. Moreover, we expect Theorems \ref{thm:main-threefold} and \ref{thm:main-higher-dim} to also be useful in practice, since it is natural to consider rational maps to a tame stack over its coarse moduli space, or to work over a field of characteristic 0. 

    \subsection*{Applications} The main application of our results is an extension of a family of objects of a tame stack $\cM$. For example, the problem of extending torsors has been studied in various settings, and by setting $\cM = \cB G$, we can extend $G$-torsors for a finite, flat, linearly reductive group $G$, see Section \ref{subsec:extension-of-torsors}. More generally, if $\cM$ is a tame moduli stack, our results can be applied to extend a fibration over an open dense subset of a regular variety whose fibers are parametrized by $\cM$, see Section \ref{subsec:fibrations}. Another application is an extension of a morphism defined at the generic point of regular varieties, see Section \ref{subsec:extension-generic}.
    
    \vspace{5pt}
    This note is organized as follows. In Section \ref{sec:root-stacks}, we review the notion of root stacks and prove that root stack construction along a simple normal crossings divisor preserves regularity. In Section \ref{sec:purity-lemma}, we generalize the Purity Lemma which was provided in \cite{AbramovichOlssonVistoliTwisted} to higher dimensional schemes and tame stacks. In Section \ref{sec:proof}, we prove Theorem \ref{thm:main} and provide a sketch of the proofs of Theorems \ref{thm:main-threefold} and \ref{thm:main-higher-dim} which are similar to the proof of Theorem \ref{thm:main}. Lastly, in Section \ref{sec:applications}, we provide applications and an example to illustrate how our results work in practice. 

    \subsection*{Acknowledgements} This paper was written under the supervision of my advisor, Dori Bejleri, at the University of Maryland, College Park. I sincerely thank him for sharing many ideas essential to the results of this work and for his generous guidance throughout. I would also like to thank Philip Engel for posing the question on the higher dimensional Purity Lemma. The research was partially supported by NSF grant DMS-2401483.
	
	\section{Notations and Conventions}
    \label{sec:notations}

    All schemes and stacks we consider are assumed to be locally of finite type over an excellent base scheme. In particular, all schemes we consider are excellent and locally of finite presentation over the base. However, we only require the excellence of schemes for the resolution of singularities, so one can replace the base by a locally noetherian scheme for the argument not using the resolution of singularities. 
    
    If $\cX$ is an algebraic stack, the \emph{inertia stack} $\cI_\cX$ of $\cX$ is the fiber product
    \begin{center}
        \begin{tikzcd}
            \cI_\cX \ar[r] \ar[d] \ar[rd,phantom,"\square"] & \cX \ar[d,"\Delta"]\\
            \cX \ar[r,"\Delta"]& \cX \times \cX
        \end{tikzcd}
    \end{center}
    where $\Delta: \cX \to \cX \times \cX$ is the diagonal morphism. Similarly, the \emph{relative inertia stack} $\cI_{\cX/\cY}$ of a morphism $\cX \to \cY$ is the fiber product $\cI_{\cX/\cY}:=  \cX \times_{\cX \times_\cY \cX} \cX$. We will assume that the algebraic stacks we consider have finite inertia so that coarse moduli spaces exist by Keel-Mori Theorem \cite[Theorem 1.1]{ConradKeelMori}, \cite[Theorem 6.12]{RydhExistenceOfGeometricQuotients}.

    We say that an algebraic stack (resp., Deligne-Mumford stack) $\cX$ is \emph{regular} in the sense of \cite[Definition 3.3.7]{AlperStacks}, that is, it has a smooth (resp., étale) presentation $V \to \cX$ by a regular scheme $V$ (equivalently, all such presentations are given by regular schemes).
    
    \begin{remark}[Regularity can be checked flat locally]\label{rmk:regularity-flat} By \cite[Ch.0, Proposition 17.3.3]{EGAIV-1}, an algebraic stack $\cX$ is regular if it has a flat cover by a regular scheme, that is, there exists a flat, surjective, representable morphism $V \to \cX$ from a regular scheme $V$.
    \end{remark}
    
    We follow the definition of tame stacks and morphisms given in \cite{AbramovichOlssonVistoliTame}. An algebraic stack $\cM$ is said to be \emph{tame} if the inertia stack $\cI_\cM \to \cM$ is finite and has linearly reductive geometric fibers. Equivalently, $\cM$ is either étale or fppf locally on its coarse moduli space a quotient of a scheme by a finite, flat, linearly reductive group scheme \cite[Theorem 3.2]{AbramovichOlssonVistoliTame}.
    
    More generally, a morphism $f:\cM \to \cN$ of algebraic stacks is said to be \emph{tame} if the relative inertia stack $\cI_{\cM/\cN} \to \cM$ is finite and has linearly reductive geometric fibers. Equivalently, the base change $\cZ \times_\cN \cM$, by a morphism $\cZ \to \cN$, is a tame stack, where $\cZ$ is a scheme or a tame stack. Furthermore, $f$ is tame if $\cM$ is tame \cite[Proposition 2.1]{BrescianiVistoliValCrit}. 
    
    By the Local Structure Theorem of Deligne-Mumford stacks \cite[Theorem 4.3.1]{AlperStacks}, above definition for tameness coincides with the usual definition for the definition of tame Deligne-Mumford stack (for example, see \cite[\S 1.3]{AbramovichVistoliCompactifying}, \cite[Definition 4.4.20]{AlperStacks}), which says that a Deligne-Mumford stack $\cM$ is tame if for any geometric point $x:\Spec \Omega \to \cM$, the order of the stabilizer group $G_x$ is prime to the characteristic of the field $\Omega$.
  
	\section{Root Stacks}\label{sec:root-stacks} In this section, we introduce the notion of a root stack. The detailed arguments are given in the literature, for example, in \cite{CadmanRoot}, \cite{AbramovichGrabebrVistoli}, \cite{FantechiMannNironi}. Let $n$ be a positive integer. Consider the quotient stack $[\bA^n / (\bG_m)^n]$ where the action of $(\bG_m)^n$ is given by multiplication on the coordinates. Note that the objects of $[\bA^n/(\bG_m)^n]$ are $n$ line bundles $(\cL_1,\cdots,\cL_n)$ with $n$ global sections $(s_1,\cdots,s_n)$ and such data are equivalent to $n$ effective Cartier divisors $(\cD_1,\cdots,\cD_n)$.
    
    Given $n$ effective Cartier divisors $\bm{D} = (\cD_1,\cdots, \cD_n)$ on a stack $\cX$, and an $n$-tuple of positive integers $\bm{r} = (r_1,\cdots,r_n)$, the root stack $\sqrt[\bm{r}]{\bm{D}/\cX}$ is defined by a Cartesian square
    \begin{center}
        \begin{tikzcd}
            \sqrt[\bm{r}]{\bm{D}/\cX} \arrow[r] \arrow[d] \ar[rd,phantom,"\square"] & \left[ \bA^n/(\bG_m)^n  \right] \arrow[d,"\bm{r}"]\\
            \cX \arrow[r,"\bm{D}"] & \left[ \bA^n/(\bG_m)^n \right]
        \end{tikzcd}
    \end{center}
    where the vertical map $\bm{r}:[\bA^n/(\bG_m)^n] \to [\bA^n/(\bG_m)^n]$ is $(\cD_1',\cdots,\cD_n') \mapsto (r_1 \cD_1',\cdots, r_n \cD_n')$. Then the canonical morphism $\sqrt[\bm{r}]{\bm{D}/\cX} \to \cX$ is an isomorphism over $\cX \setminus \cup_i \cD_i$ and if $\cX$ is a scheme, it is a coarse moduli space.
    
   \begin{remark}\label{rmk:root-stack-local-description}
        When $\cX =\Spec R$ is an affine scheme and each $\cD_i$ is defined by a global section $x_i \in R$, we have a concrete description of the root stack as a quotient stack
    \begin{align*}
        \sqrt[\mathbf{r}]{\bm{D}/\cX} = [(\Spec R[t_1,\cdots,t_n]/(t_1^{r_1}-x_1,\cdots,t_n^{r_n}-x_n))/(\mu_{r_1} \times \cdots \times \mu_{r_n})]
    \end{align*}
    where each $\mu_{r_i}$ acts by $t_i \mapsto \zeta_i \cdot t_i$. So we have an étale presentation of $\sqrt[\mathbf{r}]{\bm{D}/\cX}$ if the residue characteristics of $R$ do not divide all $r_i$'s. Thus, under this assumption, $\sqrt[\mathbf{r}]{\bm{D}/\cX}$ is a Deligne-Mumford stack.

    Meanwhile, another description of $\sqrt[\mathbf{r}]{\bm{D}/\cX}$ as a quotient stack was given in \cite{AbramovichGrabebrVistoli} as follows.
    \begin{align*}
        \sqrt[\mathbf{r}]{\bm{D}/\cX} = [(\Spec R[u_1,\cdots,u_n,t_1,t_1^{-1}\cdots,t_n,t_n^{-1}]/(u_1^{r_1}t_1-x_1,\cdots,u_n^{r_n}t_n-x_n))/\bG_m^n]
    \end{align*}
    where $\bG_m^n$ acts by $(\alpha_1,\cdots,\alpha_n)\cdot (u_1,t_1,\cdots,u_n,t_n) = (\alpha_1 u_1,\alpha_1^{-r_1} t_1,\cdots,\alpha_n u_n,\alpha_n^{-r_n}t_n)$. Since $\bG_m^n$ is smooth over an arbitrary base, we have a smooth presentation of $\sqrt[\mathbf{r}]{\bm{D}/\cX}$ so that $\sqrt[\mathbf{r}]{\bm{D}/\cX}$ is always an algebraic stack.
   \end{remark}

   \begin{remark}\label{rmk:root-stacks-are-tame}
   Consider the case where $\cX$ is an arbitrary scheme. By the local description of root stacks (Remark \ref{rmk:root-stack-local-description}), the root stack $\sqrt[\bm{r}]{\bm{D}/\cX}$ is always an algebraic stack, and if $r_i$'s are not divided by all residue characteristics of the base scheme, the root stack $\sqrt[\bm{r}]{\bm{D}/\cX}$ is Deligne-Mumford. Moreover, by \cite[Theorem 3.2]{AbramovichOlssonVistoliTame}, it is tame.

    Meanwhile, note that the group scheme $\mu_r$ is still fppf over the base even if $r$ is divided by some residue characteristic. Therefore, by \cite[Theorem 3.2]{AbramovichOlssonVistoliTame}, the root stack $\sqrt[\bm{r}]{\bm{D}/\cX}$ is always tame.
   \end{remark}

    Now we provide a series of results that the root stack construction preserves the regularity when the divisor is a simple normal crossings divisor.

    \begin{lemma}\cite[Exercise 5.5]{AM}\label{lem:commalg}
        Let $R \subset S$ be an integral ring extension. Then $r \in R$ is a unit in $R$ if and only if it is a unit in $S$.
    \end{lemma}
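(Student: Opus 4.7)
The forward direction is immediate: a unit in $R$ is automatically a unit in $S$ (the inverse, being in $R$, lies in $S$). The content is in the reverse direction, so I would focus on showing that if $r \in R$ admits an inverse $s \in S$ with $rs = 1$, then in fact $s \in R$.

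The plan is to exploit the integral dependence of $s$ over $R$. Since $S$ is integral over $R$, there exist $a_0,\ldots,a_{n-1} \in R$ and an integer $n \ge 1$ with
\begin{equation*}
s^n + a_{n-1} s^{n-1} + \cdots + a_1 s + a_0 = 0.
\end{equation*}
Multiplying through by $r^{n-1}$ and using the identity $s^k r^{n-1} = r^{n-1-k}(rs)^k = r^{n-1-k}$ for $0 \le k \le n-1$, together with $s^n r^{n-1} = s(rs)^{n-1} = s$, the equation collapses to
\begin{equation*}
s + a_{n-1} + a_{n-2} r + \cdots + a_1 r^{n-2} + a_0 r^{n-1} = 0,
\end{equation*}
which exhibits $s$ as an element of $R$. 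Hence $r$ is a unit in $R$ with inverse $s$.

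There is essentially no obstacle here: the argument is the standard rearrangement trick, and the only care needed is to keep track of exponents so that after multiplying by $r^{n-1}$ every term except the leading one becomes a polynomial expression in $r$ with coefficients in $R$. The statement could alternatively be phrased as saying that the inclusion $R \hookrightarrow S$ induces an inclusion $R^\times \hookrightarrow S^\times$ whose image is precisely $R \cap S^\times$.
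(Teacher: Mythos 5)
Your proof is correct and is essentially identical to the paper's: both handle the easy direction by observing the inverse already lies in $S$, and both prove the converse by multiplying the integral relation for $s$ by $r^{n-1}$ and using $rs=1$ to exhibit $s$ as a polynomial in $r$ with coefficients in $R$. No issues.
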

    \begin{proof}
        Let $r \in R$ be a unit and $rs = 1$ with $s \in R$. Then $rs = 1$ in $S$ and hence $r$ is a unit in $S$. Conversely, let $r \in R$ be a unit in $S$ and $rs=1$ with $s \in S$. $s$ is integral over $R$ and there is an integral relation
        \begin{align*}
            s^k + a_{k-1}s^{k-1} + \cdots + a_1 s + a_0 = 0
        \end{align*}
        with $a_i \in R$. Multiplying $r^{k-1}$, we have
        \begin{align*}
            s = -( a_{k-1} + a_{k-2}r + \cdots a_1 r^{k-2} + a_0 r^{k-1}) \in R
        \end{align*}
    \end{proof}

    \begin{lemma}\label{lem:root-stack-cover-regular}
    Let $R$ be a regular local ring of dimension $n$ and $x_1,\cdots,x_n$ a regular sequence of $R$ generating the maximal ideal $\fkm$. Let $r_1,\cdots,r_n$ be positive integers. Then the ring
    \begin{align*}
        R[t_1,\cdots,t_n]/(t_1^{r_1}-x_1,\cdots,t_n^{r_n}-x_n)
    \end{align*}
    is a regular local ring of dimension $n$.
    \end{lemma}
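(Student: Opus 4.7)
The plan is to realize $S := R[t_1,\ldots,t_n]/(t_1^{r_1}-x_1,\ldots,t_n^{r_n}-x_n)$ as a finite free $R$-module, identify $\mathfrak{M} := (t_1,\ldots,t_n)$ as its unique maximal ideal, and then deduce regularity from a dimension count. The first step would be to argue by induction on $n$, using that each relation $t_i^{r_i}-x_i$ is monic in $t_i$ and hence polynomial division applies, to show that $S$ is free over $R$ with basis $\{t_1^{\alpha_1}\cdots t_n^{\alpha_n} : 0 \le \alpha_i < r_i\}$. In particular $S$ is Noetherian, $R \hookrightarrow S$ is injective, and $S$ is integral over $R$.

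Next I would identify the maximal ideal. Since $x_i = t_i^{r_i} \in \mathfrak{M}$, the inclusion $\fkm S \subset \mathfrak{M}$ holds and the natural map $R \twoheadrightarrow S/\mathfrak{M}$ has kernel $\fkm$, so $S/\mathfrak{M} \cong R/\fkm$ is a field and $\mathfrak{M}$ is maximal. For uniqueness, let $\mathfrak{n}$ be any maximal ideal of $S$: by Lemma \ref{lem:commalg} applied to $R \subset S$, every $r \in R \setminus \fkm$ is a unit in $R$ and hence in $S$, so $\mathfrak{n} \cap R \subset \fkm$; the reverse inclusion follows from the standard fact that the contraction of a maximal ideal under an integral extension is maximal (since $R/(\mathfrak{n}\cap R) \hookrightarrow S/\mathfrak{n}$ is an integral extension into a field). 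Hence $\fkm S \subset \mathfrak{n}$, and $\mathfrak{n}$ corresponds to a maximal ideal of $S/\fkm S \cong (R/\fkm)[t_1,\ldots,t_n]/(t_1^{r_1},\ldots,t_n^{r_n})$, an Artinian local ring whose unique maximal ideal is $(\bar t_1,\ldots,\bar t_n)$, forcing $\mathfrak{n} = \mathfrak{M}$.

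Finally, since $S$ is a faithful finite $R$-module, $\dim S = \dim R = n$; and $\mathfrak{M}$ is generated by $n = \dim S$ elements, so the embedding dimension $\dim_{S/\mathfrak{M}}\mathfrak{M}/\mathfrak{M}^2$ at $\mathfrak{M}$ is bounded above by $n$ and below by $\dim S = n$, hence equals the Krull dimension, so $S$ is regular of dimension $n$. The only real subtlety is the uniqueness of the maximal ideal in the second step, which is precisely where Lemma \ref{lem:commalg} is designed to slot in; the freeness and dimension arguments are routine.
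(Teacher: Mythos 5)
Your proof is correct and follows essentially the same route as the paper: realize $S$ as a finite (hence integral) extension of $R$, show $\mathfrak{M}=(t_1,\dots,t_n)$ is the unique maximal ideal using Lemma \ref{lem:commalg}, and conclude regularity from $n=\dim S\le \dim_k \mathfrak{M}/\mathfrak{M}^2\le n$. The only cosmetic difference is in the uniqueness step, where you contract a maximal ideal of $S$ to $\fkm$ and pass to the Artinian quotient $S/\fkm S$, whereas the paper argues by a direct case analysis on whether the $x_i$ lie in the contraction; both hinge on the same unit-detection lemma.
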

    \begin{proof}   
    Let $S = R[t_1,\cdots,t_n]/(t_1^{r_1}-x_1,\cdots,t_n^{r_n}-x_n)$ and denote the residue field of $R$ by $k$. Since $S$ is finite over $R$, it is an integral extension of $R$. Thus, we have $\dim S = \dim R = n$. Let $\fkm_S = (t_1,\cdots,t_n)$. Then $S/\fkm_S = R/\fkm =k$ is a field so that $\fkm_S$ is a maximal ideal of $S$. We claim that $\fkm_S$ is the unique maximal ideal of $S$. Suppose $\fkm' \subseteq S$ is another maximal ideal of $S$. Let $\fkp = \fkm' \cap R$. If $x_i \in \fkp$ for all $i$, then $t_i \in \fkm'$ for all $i$. So we have $\fkm' \supseteq \fkm_S$ and thus $\fkm' = \fkm_S$. On the other hand, suppose $x_i \notin \fkp$ for some $i$, without loss of generality, say $x_1 \notin \fkp$. In particular, we also have $x_1 \notin \fkm'$. We have an inclusion $R/\fkp \hookrightarrow S/\fkm'$ which is an integral ring extension. Since $x_1 \in \fkm$, the image of $x_1$ in $R/\fkp$ is a non-unit. But $S/\fkm'$ is a field so the image of $x_1$ in $S/\fkm'$ is a unit and it is a contradiction by Lemma \ref{lem:commalg}. 
    
    Therefore, we have $\fkm' = \fkm_S$. So $\fkm_S$ is the unique maximal ideal of $S$. Since $\fkm_S$ is generated by $n$ elements, $\fkm_S/\fkm_S^2$ can be generated by $n$ elements over $k = S/\fkm_S$. Then we have
    \begin{align*}
        n = \dim S \le \dim_k \fkm_S/\fkm_S^2 \le n.
    \end{align*}
    It follows that $\dim S = \dim_k \fkm_S/\fkm_S^2 = n$. $S$ is a regular local ring of dimension $n$.
    \end{proof}

   \begin{corollary}\label{cor:local-root-stack-regular}
        Let $R$ be a regular local ring of dimension $n$ and $x_1,\cdots,x_n$ a regular sequence of $R$ generating the maximal ideal $\fkm$. Let $D_1,\cdots,D_n$ be effective divisors defined by $x_1,\cdots,x_n$, respectively, and $r_1,\cdots,r_n$ positive integers. Then the root stack
        \begin{align*}
            \sqrt[(r_1,\cdots,r_n)]{(D_1,\cdots,D_n)/\Spec R}
        \end{align*}
        is regular.
    \end{corollary}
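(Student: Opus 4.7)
The plan is to apply \autoref{rmk:regularity-flat} to a flat representable cover of the root stack by $\Spec S$, where $S = R[t_1,\ldots,t_n]/(t_1^{r_1}-x_1,\ldots,t_n^{r_n}-x_n)$. By \autoref{lem:root-stack-cover-regular}, $S$ is a regular local ring of dimension $n$, so $\Spec S$ is a regular scheme. It therefore suffices to exhibit a flat, surjective, representable morphism $\Spec S \to \sqrt[\bm{r}]{\bm{D}/\Spec R}$.

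First I would invoke the first quotient description in \autoref{rmk:root-stack-local-description}, which gives
\begin{align*}
    \sqrt[\bm{r}]{\bm{D}/\Spec R} = \bigl[\Spec S \,/\, \mu_{r_1} \times \cdots \times \mu_{r_n}\bigr],
\end{align*}
where $\mu_{r_i}$ acts by $t_i \mapsto \zeta_i t_i$. The quotient presentation morphism $\Spec S \to \sqrt[\bm{r}]{\bm{D}/\Spec R}$ is a $\mu_{r_1} \times \cdots \times \mu_{r_n}$-torsor; since each $\mu_{r_i}$ is a finite flat group scheme over $\Spec R$ (even when some $r_i$ is divisible by a residue characteristic), this torsor is fppf. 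In particular the presentation morphism is flat, surjective, and representable. Combined with the regularity of $\Spec S$ from \autoref{lem:root-stack-cover-regular}, \autoref{rmk:regularity-flat} then yields regularity of $\sqrt[\bm{r}]{\bm{D}/\Spec R}$.

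There is no real obstacle, since the key content is already contained in \autoref{lem:root-stack-cover-regular}. The only point worth flagging is that when some $r_i$ is divisible by the residue characteristic of $R$, the presentation $\Spec S \to \sqrt[\bm{r}]{\bm{D}/\Spec R}$ fails to be étale (or even smooth), so one cannot directly check regularity on a smooth atlas; this is precisely why one needs the flat-local criterion recorded in \autoref{rmk:regularity-flat} rather than the more standard smooth-local one. Alternatively, one could use the $\bG_m^n$-presentation of \autoref{rmk:root-stack-local-description}, but the $\mu_{\bm{r}}$-presentation is more direct because $\Spec S$ is already the explicit regular scheme produced in \autoref{lem:root-stack-cover-regular}.
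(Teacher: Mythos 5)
Your proposal is correct and matches the paper's own argument: both use the cover $\Spec R[t_1,\ldots,t_n]/(t_1^{r_1}-x_1,\ldots,t_n^{r_n}-x_n) \to \sqrt[\bm{r}]{\bm{D}/\Spec R}$ from the $\mu_{r_1}\times\cdots\times\mu_{r_n}$-quotient description, invoke Lemma \ref{lem:root-stack-cover-regular} for regularity of the cover, and apply the flat-local criterion of Remark \ref{rmk:regularity-flat} since the cover is only fppf (not étale) when some $r_i$ is divisible by a residue characteristic. No differences worth noting.
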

    \begin{proof}
        If residue characteristics of $R$ do not divide all $r_i$'s, $\mu_{r_1}\times \cdots \times \mu_{r_n}$ is étale, and thus an étale cover
        \begin{align*}
            \Spec R[t_1,\cdots,t_n]/(t_1^{r_1}-x_1,\cdots,t_n^{r_n}-x_n) \to \sqrt[(r_1,\cdots,r_n)]{(D_1,\cdots,D_n)/\Spec R}
        \end{align*}
        gives an étale presentation by a regular scheme. However, even if a residue characteristic of $R$ divides some $r_i$, the above cover is still a flat cover, and thus the root stack $\sqrt[(r_1,\cdots,r_n)]{(D_1,\cdots,D_n)/\Spec R}$ is still regular by Remark \ref{rmk:regularity-flat}.
    \end{proof}

    \begin{proposition}\label{prop:regularity-of-root-stack}
        Let $X$ be a regular scheme of dimension $n$ and $\bm{D} = (D_1,\cdots,D_N)$ be $N$ effective Cartier divisors of $X$ such that $\cup_{i=1}^N D_i$ is a simple normal crossings divisor. Let $\bm{r}$ be an $N$-tuple of positive integers. Then the root stack $\cX = \sqrt[\bm{r}]{\bm{D}/X}$ is a regular algebraic stack of dimension $n$, where the union of reduced pullbacks of $D_1,\cdots,D_N$ to $\cX$ is a simple normal crossings divisor. 
    \end{proposition}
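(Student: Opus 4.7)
The plan is to verify regularity of $\cX = \sqrt[\bm{r}]{\bm{D}/X}$ pointwise by constructing, for each $x \in X$, a flat chart of the type in Remark \ref{rmk:root-stack-local-description} that is regular along the fiber over $x$, and then invoking Remark \ref{rmk:regularity-flat}. The SNC hypothesis feeds directly into a mild generalization of Lemma \ref{lem:root-stack-cover-regular} from a full to a partial regular system of parameters.

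Fix $x \in X$. Since the $D_i$'s form an SNC divisor, only finitely many of them pass through $x$; call them $D_{i_1}, \ldots, D_{i_k}$, choose local equations $y_1, \ldots, y_k \in \cO_{X, x}$, and extend them to a regular system of parameters $y_1, \ldots, y_k, z_{k+1}, \ldots, z_n$ of $\cO_{X, x}$. Shrink to an affine open neighborhood $V = \Spec R$ of $x$ on which only these $D_{i_j}$'s appear and each is cut out by $y_j \in R$. Then $\cX|_V$ coincides with $\sqrt[\bm{r}']{\bm{D}'/V}$ for $\bm{r}' = (r_{i_1}, \ldots, r_{i_k})$, and by Remark \ref{rmk:root-stack-local-description} it admits the flat surjective chart
\begin{align*}
    W_x = \Spec R[t_1, \ldots, t_k]/(t_1^{r_{i_1}} - y_1, \ldots, t_k^{r_{i_k}} - y_k) \longrightarrow \cX|_V.
\end{align*}

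I claim $W_x$ is regular at every point lying over $x$. This follows from a small adaptation of the proof of Lemma \ref{lem:root-stack-cover-regular} to a partial regular sequence: the ring $\cO_{X, x}[t_1, \ldots, t_k]/(t_j^{r_{i_j}} - y_j)$ is finite over the local ring $\cO_{X, x}$, so every maximal ideal lies over $\fkm_x$, hence contains every $y_j$ and therefore every $t_j$, as well as $z_{k+1}, \ldots, z_n$; since $(t_1, \ldots, t_k, z_{k+1}, \ldots, z_n)$ already has residue field $k(x)$, this is the unique maximal ideal. It is generated by $n$ elements in a ring of Krull dimension $n$ (integrality preserves dimension), so the local ring at the preimage of $x$ in $W_x$ is regular of dimension $n$, with regular system of parameters $t_1, \ldots, t_k, z_{k+1}, \ldots, z_n$.

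For any $p \in \cX$ lying over $x \in X$, the chart $W_x \to \cX$ constructed above is flat with image containing $p$ and regular at each preimage of $p$; by Remark \ref{rmk:regularity-flat}, $\cX$ is regular at $p$, of dimension $n$. The SNC claim is then immediate: on $W_x$ the reduced pullback of $D_{i_j}$ is cut out by $t_j$, and $t_1, \ldots, t_k$ form an initial segment of the regular system of parameters displayed above, so the union of reduced pullbacks is an SNC divisor on $\cX$. The main point to watch is that one really must choose the chart based on the point $p$: at points $x' \ne x$ of $V$ where some $y_j$ becomes a unit, the factor $R[t_j]/(t_j^{r_{i_j}} - y_j)$ can fail to be reduced in positive characteristic, so $W_x$ need not be regular away from the fiber over $x$, and one must instead pass to $W_{x'}$.
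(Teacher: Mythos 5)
Your proof is correct and follows essentially the same route as the paper: pass to the explicit affine chart of \autoref{rmk:root-stack-local-description}, show it is a regular local ring by counting generators of the maximal ideal against the Krull dimension, and conclude by flat descent of regularity (\autoref{rmk:regularity-flat}). You are in fact somewhat more careful than the paper, whose proof cites \autoref{cor:local-root-stack-regular} "immediately" even though that corollary only treats $n$ divisors forming a full regular system of parameters: your adaptation to a partial system $y_1,\dots,y_k,z_{k+1},\dots,z_n$ with $k\le n$, and your observation that the chart must be chosen per point because $t^{r}-y$ need not define a regular scheme where $y$ is a unit in positive characteristic, are precisely the details needed to make the paper's one-line argument rigorous.
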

    \begin{proof}
        Regularity can be checked étale or flat locally, so it follows immediately from Corollary \ref{cor:local-root-stack-regular}.
    \end{proof}
  
    \section{The Purity Lemma}\label{sec:purity-lemma} In \cite[\S 2]{AbramovichVistoli}, Abramovich and Vistoli proved that the indeterminacy of a rational map from an $S_2$ scheme $X$ of dimension 2 to a separated Deligne-Mumford stack $\cM$ with coarse moduli space $M$ can be resolved if the indeterminacy locus $P$ consists of finitely many closed points and local fundamental groups of $U = X \setminus P$ around the points of $P$ are trivial. Later in \cite[Lemma 4.6]{AbramovichOlssonVistoliTwisted}, Abromovich, Olsson, and Vistoli extended this result to the case where $\cM$ is a separated tame stack with further assumption that the local Picard groups of $U$ around the points of $P$ are torsion free. These results are called the \emph{Purity Lemma} and they say that \emph{the indeterminacy of a rational map happens in codimension 1 in such cases}.
    
    In this section, we provide several results generalizing the Purity Lemma given in \cite{AbramovichOlssonVistoliTwisted}. In fact, the proof of the Purity Lemma in \cite{AbramovichOlssonVistoliTwisted} works for $X$ of higher dimension, not only for dimension 2, so we restated the result in Lemma \ref{lem:purity-higher-dimensional-I} without proof. In Lemma \ref{lem:purity-higher-dimensional-II}, we extend Lemma \ref{lem:purity-higher-dimensional-I} to the case where the indeterminacy locus is of lower codimension but still greater than equal to $2$. Lastly, in Lemma \ref{lem:purity-tame-stack}, we extend the result to the case where $X$ is replaced by a regular tame stack.

    \begin{lemma}[Purity Lemma for Higher Dimensions I]\label{lem:purity-higher-dimensional-I}
    Let $\cM$ be a separated tame stack with coarse moduli space $M$. Let $X$ be a locally noetherian separated scheme of dimension $n \ge 2$ satisfying Serre’s condition $S_2$. Let $P \subset X$ be a finite subset consisting of closed points, $U = X \setminus P$. Assume that the local fundamental groups of $U$ around the points of $P$ are trivial and that the local Picard groups of $U$ around points of $P$ are torsion free, i.e., $\pi_1^{\text{ét}}(\Spec \cO_{X,p}^{\sh} \setminus p) =0$, $\Pic(\Spec \cO_{X,p}^{\sh} \setminus p)$ is torsion-free for $p \in P$. Let $f : X \to M$ be a morphism and suppose there is a lifting $f_U : U \to M$:
            \begin{center}
                \begin{tikzcd}
                    & & \cM \arrow[d]\\ 
                    U \arrow[r,hook] \arrow[rru,"\td{f}_{U}"] & X  \arrow[r,"f"] & M
                \end{tikzcd}
            \end{center}
            Then the lifting extends to $X$:
            \begin{center}
                \begin{tikzcd}
                    & & \cM \arrow[d]\\ 
                    U \arrow[r,hook] \arrow[rru,"\td{f}_U"] & X \arrow[r,"f"] \arrow[ru, dashed,"\td{f}"] & M
                \end{tikzcd}
            \end{center}
        The lifting $\td{f}$ is unique up to a unique isomorphism.
\end{lemma}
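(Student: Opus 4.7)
The plan is to mirror the proof of the original Purity Lemma \cite[Lemma 4.6]{AbramovichOlssonVistoliTwisted}, observing that each step remains valid in arbitrary dimension $n \ge 2$: the hypotheses on $\pi_1^{\text{\'et}}$ and on local Picard groups are stated in a dimension-independent way, and the condition $\codim_X(X \setminus U) = n \ge 2$ is exactly what is needed in the $S_2$ extension step. Uniqueness of the lift up to a unique isomorphism follows from separatedness of $\cM$ and density of $U$ in $X$ in the standard way, and this uniqueness in turn allows one to reduce the existence statement to the local case $X = \Spec A$ with $A = \cO_{X,p}^{\sh}$ and $U = X \setminus \{p\}$. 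Base changing the coarse moduli map $\cM \to M$ along $f$, the existence problem becomes that of extending a section of $\cM_X := \cM \times_M X \to X$ defined over $U$ to a section over $X$.

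Next I would invoke the local structure theorem for tame stacks \cite[Theorem 3.2]{AbramovichOlssonVistoliTame}: after further replacing $M$ by its strict henselization at $f(p)$, any \'etale cover of $M$ splits, so one may present $\cM_X = [V/G]$ with $V = \Spec B$ affine over $X$ and $G$ a finite linearly reductive group scheme over the strictly henselian $X$. A section of $[V/G] \to X$ over $U$ is then the data of a $G$-torsor $T \to U$ together with a $G$-equivariant $X$-morphism $T \to V$. Over a strictly henselian base, every finite linearly reductive group scheme fits in an extension $1 \to \Delta \to G \to H \to 1$ with $\Delta$ finite diagonalizable and $H$ finite \'etale constant, so the non-abelian cohomology sequence in the fppf topology reduces the triviality of $G$-torsors on $U$ to the triviality of $H$- and $\Delta$-torsors. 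The first class vanishes since triviality of $\pi_1^{\text{\'et}}(U)$ kills all finite \'etale torsors on $U$; the second vanishes since Kummer theory identifies $H^1_{\mathrm{fppf}}(U,\mu_r) = \Pic(U)[r] = 0$, and every finite diagonalizable group scheme is a product of groups of the form $\mu_{r_i}$. Hence the torsor $T$ is trivial.

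A trivialization of $T$ identifies the section over $U$ with an $X$-morphism $U \to V$, equivalently with an $A$-algebra homomorphism $B \to \Gamma(U, \cO_U)$. Since $\codim_X(X \setminus U) = n \ge 2$ and $A$ satisfies $S_2$, we have $\Gamma(U, \cO_U) = A$, so this map factors uniquely through $A$ and extends to a section $X \to V$, and hence to a section $X \to [V/G] = \cM_X$. Composing with $\cM_X \to \cM$ produces the desired $\td{f}: X \to \cM$. The step I expect to require the most care is arranging the presentation $\cM_X = [V/G]$ over the strictly henselian base $X$, since $X \to M$ is typically not \'etale; this is the reason for first strict-henselizing $M$ at $f(p)$, after which the \'etale cover supplying the presentation splits, and base change to $X$ gives the desired local presentation.
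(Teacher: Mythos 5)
Your overall skeleton (reduce by uniqueness and limits to $X=\Spec \cO_{X,p}^{\sh}$, present $\cM\times_M X$ as $[V/G]$ with $G$ finite linearly reductive, split $G$ as an extension $1\to\Delta\to G\to H\to 1$ with $\Delta$ diagonalizable and $H$ tame \'etale constant, kill the $H$-part by $\pi_1^{\text{\'et}}(U)=0$, handle the $\Delta$-part by Kummer theory, and finish with $\Gamma(U,\cO_U)=A$ from $S_2$ and codimension $\ge 2$) is indeed the structure of the argument of \cite[Lemma 4.6]{AbramovichOlssonVistoliTwisted} that the paper simply cites. However, there is a genuine gap at the Kummer step. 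The Kummer sequence gives an exact sequence
\begin{align*}
0 \to \Gamma(U,\cO_U^{*})/\bigl(\Gamma(U,\cO_U^{*})\bigr)^{r} \to H^1_{\mathrm{fppf}}(U,\mu_r) \to \Pic(U)[r] \to 0,
\end{align*}
so torsion-freeness of $\Pic(U)$ does \emph{not} give $H^1_{\mathrm{fppf}}(U,\mu_r)=0$: you also need every unit on $U$ to be an $r$-th power. Since $\Gamma(U,\cO_U^{*})=A^{*}$ (by the $S_2$ argument), this holds when $r$ is prime to the residue characteristic (Hensel's lemma over the strictly henselian $A$), but it fails in the wild case: e.g.\ for $A$ the strict henselization of a smooth surface at a point in characteristic $p$, the unit $1+x$ is not a $p$-th power, so $H^1_{\mathrm{fppf}}(U,\mu_p)\ne 0$ and the $G$-torsor $T\to U$ need not be trivial. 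This is exactly the case that distinguishes tame Artin stacks from tame Deligne--Mumford stacks, and it is the case the lemma (and the paper's use of it in arbitrary characteristic) must cover; with your argument as written you cannot produce the map $U\to V$ at all in that situation.

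The correct route, which is what the cited proof actually does, is to \emph{extend} the torsor rather than trivialize it: use $\Pic(U)[r]=0$ only to conclude that the line bundle underlying the $\mu_r$-part is trivial, so that this part of the torsor is the $r$-th root of a unit $u\in\Gamma(U,\cO_U^{*})=A^{*}$; since $u$ is a unit of $A$, the torsor $\Spec\,\cO_U[t]/(t^r-u)$ extends to a $\mu_r$-torsor over all of $X$, and the trivial $H$-torsor extends trivially, so the whole $G$-torsor $P\to U$ extends to a finite flat $G$-torsor $\ov{P}\to X$. Then the $G$-equivariant map $P\to V$ extends to $\ov{P}\to V$ because $\ov{P}$ is finite and flat over $X$, hence $S_2$ with $\ov{P}\setminus P$ of codimension $\ge 2$, so $\Gamma(P,\cO_P)=\Gamma(\ov{P},\cO_{\ov{P}})$ and equivariance persists by restriction being injective on functions. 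This yields the section $X\to[V/G]$ without ever needing the torsor on $U$ to be trivial. So you should replace the sentence claiming $H^1_{\mathrm{fppf}}(U,\mu_r)=\Pic(U)[r]=0$ and the ensuing trivialization by this extension argument; the rest of your reductions (including uniqueness from separatedness and the spreading-out/gluing step, which uses that $\cM$ is locally of finite presentation per the paper's standing conventions) are fine.
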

\begin{proof}
   The argument used in the proof of the Purity Lemma \cite[Lemma 4.6]{AbramovichOlssonVistoliTwisted} still applies.
\end{proof}

\begin{lemma}[Purity Lemma for Higher Dimensions II]\label{lem:purity-higher-dimensional-II}
      Let $\cM$ be a separated tame stack, locally of finite presentation, with coarse moduli space $M$. Let $X$ be a locally noetherian separated scheme of dimension $n \ge 2$ satisfying Serre's condition $S_2$ and $U \subseteq X$ an open dense subscheme. Suppose that $Z:=X \setminus U= \cup_{i=1}^k{Z_i}$ where $Z_i$'s are irreducible components of $Z$ with $\codim_X Z_i \ge 2$ for all $i=1,\cdots,k$. Assume further that the local fundamental groups of $U$ around the (possibly non-closed) points of $Z$ are trivial and the local Picard groups of $U$ around the (possibly non-closed) points of $Z$ are torsion free, i.e., $\pi_1^{\text{ét}}(\Spec \cO_{X,p}^{\sh} \setminus p) =0$, $\Pic(\Spec \cO_{X,p}^{\sh} \setminus p)$ is torsion free for all $p \in Z$. Let $f : X \to M$ be a morphism and suppose there is a lifting $f_U : U \to M$:
            \begin{center}
                \begin{tikzcd}
                    & & \cM \arrow[d]\\ 
                    U \arrow[r,hook] \arrow[rru,"\td{f}_{U}"] & X  \arrow[r,"f"] & M
                \end{tikzcd}
            \end{center}
            Then the lifting extends to $X$:
            \begin{center}
                \begin{tikzcd}
                    & & \cM \arrow[d]\\ 
                    U \arrow[r,hook] \arrow[rru,"\td{f}_U"] & X \arrow[r,"f"] \arrow[ru, dashed,"\td{f}"] & M
                \end{tikzcd}
            \end{center}
        The lifting $\td{f}$ is unique up to a unique isomorphism.
\end{lemma}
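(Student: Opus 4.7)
The plan is to argue by Noetherian induction on $d := \dim Z$, where $Z := X \setminus U$. The base case $d = 0$ corresponds to $Z$ being a finite set of closed points, in which case the statement is precisely Lemma~\ref{lem:purity-higher-dimensional-I}. For the inductive step, I assume the result whenever the indeterminacy locus has dimension strictly less than $d$, and let $\xi_1, \dots, \xi_k$ denote the generic points of the $d$-dimensional irreducible components of $Z$.

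The key local input is Lemma~\ref{lem:purity-higher-dimensional-I} applied at each $\xi_i$. Set $R_i := \cO_{X,\xi_i}^{\sh}$; this is a local noetherian separated $S_2$ scheme whose dimension equals $\codim_X \overline{\{\xi_i\}} \ge 2$. I first observe that every point of $\Spec R_i$ other than the closed point maps into $U$: any such point has image a proper generalization $q'$ of $\xi_i$ in $X$, and if $q' \in Z$ then $q' \in Z_j$ for some $j$, which forces $Z_i = \overline{\{\xi_i\}} \subseteq \overline{\{q'\}} \subseteq Z_j$ and contradicts the fact that the $Z_j$ are distinct irreducible components. Hence $\td{f}_U$ pulls back to a morphism $\Spec R_i \setminus \{\xi_i\} \to \cM$, and the hypotheses on $\pi_1^{\text{ét}}$ and $\Pic$ at $\xi_i$ are precisely those required by Lemma~\ref{lem:purity-higher-dimensional-I}, yielding an extension $\Spec R_i \to \cM$.

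Because $\cM$ is locally of finite presentation, standard limit arguments descend each extension to a morphism $V_i \to \cM$ from an étale neighborhood $V_i \to X$ of $\xi_i$, compatibly with $\td{f}_U$. Let $W \subseteq X$ be the open subset obtained as the union of $U$ with the (open) images of the $V_i$; then $W$ contains every $\xi_i$, so $Z' := X \setminus W$ is closed in $X$, contained in $Z$, and has $\dim Z' < d$. The hypotheses on $\pi_1^{\text{ét}}$ and $\Pic$ at points of $Z'$ are inherited from those at points of $Z$. The main obstacle is to show that the étale cover $U \sqcup \bigsqcup_i V_i \to W$, together with the morphisms to $\cM$ on each piece, yields genuine descent data: here one uses the uniqueness clause of Lemma~\ref{lem:purity-higher-dimensional-I} together with the separatedness of $\cM$ to identify the two morphisms on each overlap by unique $2$-isomorphisms, which automatically satisfy the cocycle condition because they already agree on the dense opens lying over $U$. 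Descent then produces a morphism $W \to \cM$ extending $\td{f}_U$, and applying the inductive hypothesis to $W \subseteq X$ completes the construction of $\td{f}$; uniqueness of $\td{f}$ propagates through the induction from the uniqueness in the base case.
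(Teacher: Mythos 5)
Your overall strategy is the same as the paper's: induct on the dimension of the top-dimensional components of $Z$, apply Lemma~\ref{lem:purity-higher-dimensional-I} at their generic points, spread out using that $\cM$ is locally of finite presentation, glue with $\td{f}_U$, and finish by the inductive hypothesis applied to the smaller bad locus. The one real divergence is that the paper localizes at the Zariski local rings $\cO_{X,\eta_j}$ rather than their strict henselizations; since the hypotheses of Lemma~\ref{lem:purity-higher-dimensional-I} are already phrased in terms of $\cO_{X,p}^{\sh}$, nothing is gained by henselizing first, and the Zariski choice makes the spreading out land in Zariski open neighborhoods, so that after shrinking each $V_j$ to miss the other components all pairwise overlaps lie inside $U$, where both restrictions are identified with $\td{f}_U$ and the gluing is immediate. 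Your checks that every non-closed point of $\Spec R_i$ lies over $U$, that $\dim(X\setminus W)<d$, and that the hypotheses pass to $(X,W)$ are correct and match the paper.

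By working with $R_i=\cO_{X,\xi_i}^{\sh}$ you force yourself into genuine étale descent, and the justification you give for the descent data does not work as stated. On the overlaps $V_i\times_W V_i$ and $V_i\times_W V_j$ the locus lying over $Z$ is in general positive-dimensional (it is étale over a neighborhood of the $d$-dimensional component $Z_i$), so the uniqueness clause of Lemma~\ref{lem:purity-higher-dimensional-I} --- which concerns liftings when the indeterminacy locus is a finite set of closed points --- cannot be invoked to identify the two pullbacks there; and separatedness of $\cM$ by itself does not produce an isomorphism on the whole overlap from one on the dense $U$-locus (two morphisms to a separated stack isomorphic over a dense open need not be isomorphic). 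What saves the argument is that each overlap is $S_2$ (being étale over $X$) and its bad locus has codimension $\ge 2$, so one can extend the $2$-isomorphism from the $U$-locus using the finiteness of the diagonal of $\cM$, i.e.\ exactly the statement \cite[Lemma 2.3]{DiLorenzoInchiostro} that the paper invokes in the proof of Lemma~\ref{lem:purity-tame-stack}; with that extension in hand, your observation that the cocycle condition can be checked over the dense $U$-locus is fine. So either cite that extension result at this step, or, more simply, drop the strict henselization and localize at $\cO_{X,\xi_i}$ as the paper does, which eliminates the étale descent altogether.
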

\begin{proof}
    We may assume that $Z$ is reduced by giving the reduced induced structure to $Z$. We prove by induction on the maximal dimension $N$ of irreducible components of $Z$.
    
    The case $N = 0$ follows immediately from Lemma \ref{lem:purity-higher-dimensional-I}. Suppose $N>1$ and assume that the theorem holds for the maximal dimension of irreducible components of such $Z$ less than $N$. Let $i_1,\cdots,i_l$ be indices such that $\dim Z_{i_j} = N$, and $\eta_j$ the generic point of $Z_{i_j}$. Then $\cO_{X,\eta_j}$ is a regular local ring of dimension $\codim_X Z_{i_j} = \dim X - N$. Moreover, $\eta_j$ is the unique closed point of $\Spec \cO_{X,\eta_j}$, and we have the following diagram induced by the given morphisms.
    \begin{center}
        \begin{tikzcd}
            \Spec \cO_{X,\eta_j} \setminus \eta_j \ar[r] \ar[d,hook] & \cM \ar[d] \\
            \Spec \cO_{X,\eta_j} \ar[r]& M
        \end{tikzcd}
    \end{center}
    Since $\Spec \cO_{X,\eta_j}$ is a regular scheme of dimension $\codim_X Z_{i_j} \ge 2$, Lemma \ref{lem:purity-higher-dimensional-I} gives a morphism $\Spec \cO_{X,\eta_j} \to \cM$ fitting into the diagram by the assumption on the local fundamental groups and local Picard groups. Since $\cM$ is locally of finite presentation, there exists an open neighborhood $V_j \subseteq X$ of $\eta_j$ with a morphism $V_j \to \cM$ which represents the morphism $\Spec \cO_{X,\eta_j} = \lim_{\eta_i \in W} W \to \cM$ where $W$ runs over all open affine neighborhoods of $\eta_j$. Let $V = U \cup (\cup_j V_j)$. The morphisms $U \to \cM$ and $V_j \to \cM$ agree on the overlaps so that they glue to a morphism $V \to \cM$. Meanwhile, since $V$ is dense in $X$, $V \cap Z$ is dense in $Z$. So $Z \setminus V$ consists of irreducible components of dimension strictly less than $N = \max_i \dim Z_i$. By induction, $V \to \cM$ extends to a morphism $X \to \cM$.
\end{proof}

    \begin{lemma}\label{lem:purity-regular-local-fund-pic}
          Let $X$ be a regular, locally noetherian, separated scheme of dimension $n \ge 2$ and $U \subseteq X$ an open dense subscheme. Suppose that $X \setminus U= \cup_{i=1}^k{Z_i}$ where, for all $i=1,\cdots,k$, $Z_i$ is of codimension $\ge 2$. Then the local fundamental groups and the local Picard groups of $U$ around the (possibly non-closed) points of $Z$ are trivial i.e., $\pi_1^{\text{ét}}(\Spec \cO_{X,p}^{\sh} \setminus p) =0$, $\Pic(\Spec \cO_{X,p}^{\sh} \setminus p) =0 $ for all $p \in Z$. 
    \end{lemma}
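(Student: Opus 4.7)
The plan is to reduce the statement to a single claim about strictly henselian regular local rings: if $R$ is a strictly henselian regular local ring of dimension $\ge 2$ with maximal ideal $\fkm$, then $\pi_1^{\mathrm{\acute{e}t}}(\Spec R \setminus \{\fkm\}) = 0$ and $\Pic(\Spec R \setminus \{\fkm\}) = 0$. Once this is established, the lemma follows: for any $p \in Z$, regularity of $X$ makes $R := \cO_{X,p}^{\sh}$ a strictly henselian regular local ring, and since $p$ is a (possibly non-closed) point lying on some $Z_i$ with $\codim_X Z_i \ge 2$, we get $\dim R = \codim_X \overline{\{p\}} \ge \codim_X Z_i \ge 2$.

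For the Picard group, I would invoke the Auslander--Buchsbaum theorem, which says that every regular local ring is a UFD, so $\mathrm{Cl}(\Spec R) = 0$. Because $\{\fkm\}$ has codimension $\dim R \ge 2$ in $\Spec R$, Weil divisors extend by taking closures, so the restriction map $\mathrm{Cl}(\Spec R) \to \mathrm{Cl}(\Spec R \setminus \{\fkm\})$ is an isomorphism. Since both $\Spec R$ and its punctured spectrum are regular, the natural injection $\Pic \hookrightarrow \mathrm{Cl}$ is an isomorphism on each side, and hence $\Pic(\Spec R \setminus \{\fkm\}) = 0$.

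For the fundamental group, the key input is the Zariski--Nagata purity theorem (SGA~2, Exp.~X, Thm.~3.4), which guarantees that any finite étale cover of $\Spec R \setminus \{\fkm\}$ extends uniquely to a finite étale cover of $\Spec R$ whenever $R$ is regular of dimension $\ge 2$. Combining this with the fact that a strictly henselian local ring has trivial étale fundamental group (every connected finite étale cover of $\Spec R$ is $\Spec R$ itself), one deduces that every connected finite étale cover of $\Spec R \setminus \{\fkm\}$ is trivial, hence $\pi_1^{\mathrm{\acute{e}t}}(\Spec R \setminus \{\fkm\}) = 0$.

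The only point that requires some care is the dimension bookkeeping for non-closed $p$: one must verify that the codimension in $\Spec \cO_{X,p}^{\sh}$ of the closed point is indeed $\ge 2$, which I will do by appealing to $\dim \cO_{X,p}^{\sh} = \dim \cO_{X,p} = \codim_X \overline{\{p\}}$ together with the codimension assumption on $Z_i$. No substantial obstacle is expected, since the statement is essentially a packaging of Auslander--Buchsbaum and Zariski--Nagata purity; the main work has been front-loaded into citing those classical results.
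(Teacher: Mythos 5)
Your proposal is correct and follows essentially the same route as the paper: Zariski--Nagata purity plus strict henselianness for the vanishing of $\pi_1^{\text{ét}}$ of the punctured spectrum, and the Auslander--Buchsbaum UFD property together with invariance of divisor classes under removing the codimension $\ge 2$ closed point for the vanishing of the local Picard group. Your explicit passage through $\mathrm{Cl}$ and the dimension bookkeeping $\dim \cO_{X,p}^{\sh} = \codim_X \overline{\{p\}} \ge 2$ merely spell out steps the paper states more briefly.
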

    \begin{proof}
        Let $p \in Z$ be an arbitrary point and $R = \cO_{X,p}^{\sh}$. Then $R$ is a regular local ring of dimension $m := \codim_X \ov{\{p\}}$, and we have $2 \le m \le n$ by assumptions. 

        If $Q \to \Spec R \setminus p$ is a finite étale cover, Zariski-Nagata Purity (for example, \cite{Nagata-Purity} or \cite[Tag \spref{0BMB}]{stacks-project}) implies that $Q$ extends to a finite étale cover $\td{Q} \to \Spec R$ as $R$ is a regular local ring. Since $R$ is strictly henselian, $\td{Q}$ must be trivial and so is $Q$. Thus, the local fundamental group $\pi_1^{\text{ét}}(\Spec R\setminus p)$ is trivial.
    
        On the other hand, since $R$ is a regular local ring of dimension $m \ge 2$, we have $\Pic (\Spec R \setminus p) \simeq \Pic( \Spec R)$. Meanwhile, since $R$ is a UFD, we have $\Pic (\Spec R) = 0$. Hence, the local Picard group $\Pic ( \Spec R \setminus p)$ is trivial. 
    \end{proof}

    \begin{remark}\label{rmk:purity-regular}
        By Lemma \ref{lem:purity-regular-local-fund-pic}, the assumptions on the local fundamental groups and the local Picard groups in the Purity Lemma are satisfied for $X$ regular. Therefore, Lemma \ref{lem:purity-higher-dimensional-I} and \ref{lem:purity-higher-dimensional-II} hold for $X$ regular.
    \end{remark}

    \begin{lemma}[Purity Lemma for Tame Stacks]\label{lem:purity-tame-stack} Let $\cM$ be a separated tame stack, locally of finite presentation, with coarse moduli space $M$. Let $\cX$ be a regular, locally noetherian, separated tame stack of dimension $n \ge 2$ and $\cU \subseteq \cX$ an open substack whose complement consists of finitely many irreducible components of codimension $\ge 2$. Let $f:\cX \to M$ be a morphism and suppose there is a lifting $\td{f}_{\cU}:\cU \to \cM$:
        \begin{center}
            \begin{tikzcd}
                & & \cM \arrow[d]\\ 
                \cU \arrow[r,hook] \arrow[rru,"\td{f}_{\cU}"] & \cX  \arrow[r,"f"] & M
            \end{tikzcd}
        \end{center}
        Then the lifting extends to $\cX$:
        \begin{center}
            \begin{tikzcd}
                & & \cM \arrow[d]\\ 
                 \cU \arrow[r,hook] \arrow[rru,"\td{f}_{\cU}"] & \cX \arrow[r,"f"] \arrow[ru, dashed,"\td{f}"] & M
            \end{tikzcd}
        \end{center}
        Moreover, the lifting $\td{f}$ is unique up to unique isomorphism.
        \end{lemma}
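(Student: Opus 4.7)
The plan is to reduce the statement to the scheme version proved in Lemma \ref{lem:purity-higher-dimensional-II} (combined with Remark \ref{rmk:purity-regular}) by pulling back along a smooth atlas of $\cX$, and then to descend the resulting morphism back to $\cX$. First I would choose a smooth presentation $\pi: V \to \cX$ by a regular scheme $V$, which exists because $\cX$ is regular. Setting $V_{\cU} := \pi^{-1}(\cU) \subseteq V$, the flatness and surjectivity of $\pi$ ensure that $V_{\cU}$ is open dense in $V$ and that its complement still has codimension $\geq 2$. Then $V_{\cU} \to \cU \to \cM$ lifts $V \to \cX \to M$ over $V_{\cU}$, and since $V$ is a regular, separated, locally noetherian scheme of dimension $\geq 2$, Lemma \ref{lem:purity-higher-dimensional-II} combined with Remark \ref{rmk:purity-regular} produces an extension $\td{f}_V: V \to \cM$, unique up to a unique $2$-isomorphism.

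To descend $\td{f}_V$ along $\pi$ I would construct a $2$-isomorphism between the two pullbacks $\td{f}_V \circ p_1$ and $\td{f}_V \circ p_2$ along the projections $p_1, p_2: V \times_{\cX} V \rightrightarrows V$, and then verify the cocycle condition on $V \times_{\cX} V \times_{\cX} V$. The fiber product $V \times_{\cX} V$ is a regular algebraic space (being smooth over $V$), so I would take an étale cover $W \to V \times_{\cX} V$ by a scheme and let $W_{\cU}$ be the preimage of $\cU$. Then $W_{\cU}$ is open dense in $W$ with complement of codimension $\geq 2$, and on $W_{\cU}$ the two compositions into $\cM$ through $p_1$ and $p_2$ agree via the canonical $2$-iso inherited from $V_{\cU} \to \cU$. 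Applying the uniqueness part of Lemma \ref{lem:purity-higher-dimensional-II} on $W$ yields a unique $2$-iso on $W$ extending this, and a further application of uniqueness on an étale scheme cover of $W \times_{V \times_{\cX} V} W$ shows that this $2$-iso descends to $V \times_{\cX} V$; the cocycle condition on $V \times_{\cX} V \times_{\cX} V$ is verified in the same way.

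Uniqueness of the lift $\td{f}: \cX \to \cM$ follows by the same device: any two extensions pull back to extensions on $V$, which coincide up to a unique $2$-iso by the scheme case, and this $2$-iso then descends to $\cX$. The step I expect to be the main obstacle is the descent bookkeeping in the second paragraph: the repeated passage between algebraic spaces, their étale scheme covers, and iterated fiber products demands careful tracking of $2$-isomorphisms. The key technical input that makes this work — namely, that codimension $\geq 2$ complements are preserved under flat morphisms, so that Lemma \ref{lem:purity-higher-dimensional-II} can be invoked at each stage — is standard, and the argument beyond this is essentially formal.
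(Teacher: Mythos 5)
Your proposal is correct in outline, but it follows a genuinely different route from the paper. The paper does not work with an arbitrary smooth atlas: it passes to the coarse space of $\cX$ and invokes the local structure theorem for tame stacks \cite[Theorem 3.2]{AbramovichOlssonVistoliTame} to assume, étale-locally on the coarse space, that $\cX=[X/G]$ and $\cU=[U/G]$ with $G$ a finite, flat, linearly reductive group scheme acting on a regular scheme $X$; it then applies the scheme-level purity (Lemma \ref{lem:purity-higher-dimensional-II} with Remark \ref{rmk:purity-regular}) to the finite flat cover $X\to[X/G]$, and descends by showing that the two compositions $G\times X\simeq X\times_{[X/G]}X\rightrightarrows X\to\cM$ literally agree — using that $G\times X$ is $S_2$ and that $\cM$, being separated with finite inertia, has finite diagonal, via \cite[Lemma 2.3]{DiLorenzoInchiostro} — so finite flat descent applies with essentially no $2$-categorical bookkeeping. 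Your atlas-plus-descent argument instead builds the descent datum by hand: the $2$-isomorphism on $V\times_\cX V$ comes from the uniqueness clause of Lemma \ref{lem:purity-higher-dimensional-II} on an étale scheme cover, and its descent to the algebraic space $V\times_\cX V$, the cocycle identity, and the uniqueness of $\td f$ all follow because the relevant fiber products are regular (hence reduced) and $2$-isomorphisms into the separated, finite-inertia stack $\cM$ extending a given one from a dense open are unique (in the spirit of Proposition \ref{prop:separated}). What each approach buys: yours avoids the structure theorem altogether (in particular the step where the paper must know the torsor total space is regular, which is delicate since that cover is only flat, not étale, when $G$ is non-étale), at the price of the $2$-descent bookkeeping you flag; the paper's quotient presentation makes the descent essentially one application of a rigidity lemma. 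Two small points to tidy in your version: take $V$ (and the étale covers $W$) to be disjoint unions of affines so that the separatedness hypothesis of Lemma \ref{lem:purity-higher-dimensional-II} is met, and arrange the ``finitely many irreducible components'' hypothesis by working over quasi-compact opens and gluing the extensions, which is legitimate precisely because of the uniqueness statement.
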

        \begin{proof}
            Let $\cX \to X$ be a coarse moduli space. By \cite[Theorem 3.2]{AbramovichOlssonVistoliTame}, there exists an étale cover $X' \to X$, and a finite, flat, linearly reductive group scheme $G \to X'$ acting on a finite scheme $Y \to X'$ such that $\cX \times_X X' \simeq [Y/G]$. Since $\cX$ is a regular stack of dimension $n$, $Y$ is a regular scheme of dimension $n$, and we have $\cU \times_\cX [Y/G] \simeq [V/G]$ for some open dense subscheme of $V$ with $G$-action on it, whose complement consists of finitely many irreducible components of codimension $\ge 2$. Since the question is local in the étale topology, by replacing $\cX$ and $\cU$ by $[Y/G]$ and $[V/G]$, we may assume that $\cX = [X/G]$ and $\cU = [U/G]$ where $X$ is a regular scheme of dimension $n$, $U \subseteq X$ is an open dense subscheme whose complement consists of finitely many irreducible components of codimension $\ge 2$, and $G$ is a finite, flat, linearly reductive group scheme acting on $X$ such that its restricted action on $U$ commutes with the inclusion $U \hookrightarrow X$.
            
            By Lemma \ref{lem:purity-higher-dimensional-II} and Remark \ref{rmk:purity-regular}, we get a unique extension of the composition $U \to [U/G] \xrightarrow{\td{f}_{\cU}} \cM$ to $\td{f}_X:X \to \cM$. So we have a following commutative diagram
             \begin{center}
                \begin{tikzcd}
                    & & \cM \arrow[d]\\ 
                    \left[U/G \right] \arrow[r,hook] \arrow[rru,"\td{f}_{\cU}"] &  \left[X/G \right]\arrow[r] & M \\
                    U \arrow[u] \arrow[r,hook] & X \arrow[u] \ar[ruu,"\td{f}_X"'] &
                \end{tikzcd}
            \end{center}
            Two compositions $G \times X \simeq X \times_{[X/G]} X \rightrightarrows X \to \cM$ agree when restricted to $G \times U \simeq U \times_{[U/G]} U$ since the morphism $U \to \cM$ descends to $[U/G] \to \cM$. $G \times X$ is a local complete intersection since $X$ is regular, and hence it is $S_2$ (for example, by \cite[Proposition 18.13]{EisenbudCommAlg}). Also, since $\cM$ is separated and has finite inertia, it has finite diagonal. Thus, the two compositions $G \times X \rightrightarrows X \to \cM$ agree by \cite[Lemma 2.3]{DiLorenzoInchiostro}. By flat descent, $X \to \cM$ descends to $\td{f}:[X/G] \to \cM$ and we get the desired result. 
        \end{proof}
        
	\section{Proofs of Theorems \ref{thm:main}, \ref{thm:main-threefold}, and \ref{thm:main-higher-dim}}\label{sec:proof}
        We now prove Theorems \ref{thm:main}, \ref{thm:main-threefold} and \ref{thm:main-higher-dim}. We begin with the proof of Theorem \ref{thm:main}.
        \begin{remark}
            As remarked in Section \ref{sec:intro}, we may take $f: \cM \to \cN$ in Theorem \ref{thm:main} to be a coarse moduli space $\cM \to M$ of a tame stack $\cM$. Indeed, we can reduce the proof to this special case.
        \end{remark}
        \subsection{Proof of Theorem \ref{thm:main} (Reductions)}\label{subsec:proof-reductions}
                Firstly, we reduce to the case where $f:\cM \to \cN$ is a coarse moduli space $\cM \to M$ of a tame stack $\cM$. Indeed, suppose that the theorem holds for the coarse moduli space $\cM \to M$ of a tame algebraic stack $\cM$. For an arbitrary tame proper morphism $f: \cM \to \cN$ of algebraic stacks, consider the fiber product $\cM_X := \cM \times_\cN X$. By the universal property of the coarse moduli space, the projection $\cM_X \to X$ uniquely factors through the coarse moduli space $M_X$ of $\cM_X$. Note that $M_X \to X$ is proper since $\cM_X \to X$ is proper \cite[Theorem 1.1]{ConradKeelMori}. Regarding $U,X$ and $M_X$ as $X$-schemes, we have a rational map $X \dashrightarrow M_X$ over $X$ defined by the composition $U \to \cM_X \to M_X$. Consider the graph $\Gamma \subseteq X \times_X M_X \simeq M_X$ of a rational map $X \dashrightarrow M_X$ with the reduced induced closed subscheme structure. The projection from $\Gamma$ to $X$ is the composition $\Gamma \hookrightarrow M_X \to X$ where $\Gamma \hookrightarrow M_X$ is a closed embedding and $M_X \to X$ is proper. Hence, $\Gamma \to X$ is a proper birational map which is an isomorphism over $U$. By \cite[Theorem 16.4]{CossartJannsenSaito}, there exists a regular surface $\Gamma'$ with a projective birational map $\Gamma' \to \Gamma$ which factors as a finite sequence of blow-ups at a closed point in the singular locus of $\Gamma$. Then the composition $\Gamma' \to \Gamma \to X$ is also a proper birational map which is an isomorphism over $U$. Now we can apply the theorem to the square
        \begin{center}
            \begin{tikzcd}
            U \arrow[r] \arrow[d,hook] & \cM_X \arrow[d]\\
            \Gamma' \arrow[r] & M_X 
            \end{tikzcd}
        \end{center}
        and get the desired result. Note also that if the coarse moduli space $M_X$ is projective, then $\Gamma \to X$ is also projective so that it is isomorphic to a blow-up of $X$. Thus, $X' \to X$ in the theorem can be factored into a finite sequence of blow-ups by the rest of the proof.

        Now we may assume that $X$ is noetherian. Indeed, once we have a result for noetherian case, then for locally noetherian case, we may cover $X$ by noetherian open subschemes $X_i$ and obtain regular stacks $\cX_i$ and morphisms $\cX_i \to \cM$ fitting into the diagrams. According to our construction which will be described in Section \ref{subsec:proof-codim2}, each $\cX_i$ are constructed from $X_i$ by taking a finite sequence of blow-ups $X_i' \to X_i$ (obtained by \cite[Corollary 0.4]{CossartJannsenSaito}, or \cite[Theorem 5.9]{CossartJannsenSaito}) and then taking a root stack $\cX_i \to X_i'$. By \cite[Theorem 16.2]{CossartJannsenSaito}, the sequence $X_i' \to X_i$ is functorial in the sense that it is compatible with automorphisms of $X_i$ (which are compatible with the indeterminacy locus), and Zariski or étale localizations. Moreover, the description of root stacks in Section \ref{sec:root-stacks} shows that the construction of root stacks are also functorial. Therefore, the morphisms $\cX_i \to X_i' \to X_i$ glue to $\cX \to X' \to X$ where $X' \to X$ is a finite sequence of blow-ups and $\cX \to X'$ is a root stack. Moreover, note that $\cX_i$'s are regular and $\cM$ is separated. Since morphisms $\cX_i \to \cM$ agree on the intersections, they uniquely glue to a morphism $\cX \to \cM$ by the following proposition, which was originally given in \cite[Proposition 1.2]{FantechiMannNironi} for Deligne-Mumford stacks, as root stacks are tame (Remark \ref{rmk:root-stacks-are-tame}).
        
        \begin{proposition}\label{prop:separated}
            Let $\cX$, $\cY$ be tame stacks. Assume that $\cX$ is normal and $\cY$ is separated. Let $i: \cU \hookrightarrow \cX$ be a dominant open immersion of the tame stack $\cU$. If $f,g :\cX \to \cY$ are two morphisms of stacks such that there exists a 2-arrow $f \circ i \xRightarrow{\beta} g \circ i$ then there exists a unique 2-arrow $\alpha: f \Rightarrow g$ such that $\alpha \circ \text{id}_i = \beta$.
        \end{proposition}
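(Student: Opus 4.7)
The plan is to translate the problem into extending a section of the isomorphism sheaf of $2$-arrows. Form the $2$-Cartesian square
\begin{center}
\begin{tikzcd}
\uIsom_\cX(f,g) \ar[r] \ar[d,"p"'] \ar[rd,phantom,"\square"] & \cY \ar[d,"\Delta_\cY"] \\
\cX \ar[r,"(f{,}g)"'] & \cY \times \cY
\end{tikzcd}
\end{center}
so that sections of $p$ over $T \to \cX$ correspond canonically to $2$-arrows $f|_T \Rightarrow g|_T$. Because $\cY$ is separated and has finite inertia, the diagonal $\Delta_\cY$ is proper and quasi-finite, hence finite and representable; pulling back, $p$ is a finite representable morphism, so $\uIsom_\cX(f,g)$ is a finite algebraic space over $\cX$. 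The given $2$-arrow $\beta$ corresponds to a section $s_\beta \colon \cU \to \uIsom_\cX(f,g)$ of $p$ over $\cU \subseteq \cX$, and the task is to extend $s_\beta$ uniquely to a section defined over all of $\cX$.

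Since $p$ is separated (being finite), $s_\beta$ is a closed immersion into $p^{-1}(\cU)$, so the image $s_\beta(\cU)$ is locally closed in $\uIsom_\cX(f,g)$. Let $\cZ$ be its scheme-theoretic closure. Then $\cZ \to \cX$ is finite, and $\cZ \cap p^{-1}(\cU) = s_\beta(\cU)$ so that $\cZ|_\cU \xrightarrow{\sim} \cU$; since $\cU$ is dense in $\cX$, the morphism $\cZ \to \cX$ is a finite morphism that is an isomorphism over a dense open. I would then invoke the stacky version of Zariski's Main Theorem---a finite birational morphism onto a normal algebraic stack is an isomorphism---to conclude $\cZ \to \cX$ is an isomorphism. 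Concretely, this is carried out by choosing a smooth presentation $V \to \cX$ by a normal scheme (which exists because $\cX$ is normal), pulling back $\cZ \to \cX$ to $\cZ_V \to V$ (still finite and generically an isomorphism), applying the classical Zariski's Main Theorem on schemes, and descending by smooth descent of the property of being an isomorphism. The resulting inverse $\cX \xrightarrow{\sim} \cZ \hookrightarrow \uIsom_\cX(f,g)$ is a section of $p$ and so corresponds to a $2$-arrow $\alpha \colon f \Rightarrow g$ with $\alpha \circ \id_i = \beta$.

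Uniqueness is then formal: two extensions $\alpha, \alpha'$ yield sections $s_\alpha, s_{\alpha'} \colon \cX \to \uIsom_\cX(f,g)$ agreeing on $\cU$, and because $p$ is separated their equalizer is closed in $\cX$ and contains the schematically dense open substack $\cU$, hence equals all of $\cX$. The only genuine technical hurdle is the stacky Zariski's Main Theorem step above; every other ingredient---formation of the $2$-fiber product, finiteness of $\Delta_\cY$ from separatedness plus finite inertia, the closed-immersion nature of sections of separated morphisms, and well-definedness of scheme-theoretic closure---is routine once the setup is in place.
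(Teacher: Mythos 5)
Your argument is correct, but it follows a genuinely different route from the paper. The paper proves this proposition by citation: since a tame stack is \'etale-locally on its coarse space a quotient $[Y/G]$ with $G$ finite, flat and linearly reductive (\cite[Theorem 3.2]{AbramovichOlssonVistoliTame}), the proof of \cite[Proposition 1.2]{FantechiMannNironi} for Deligne--Mumford stacks carries over verbatim. You instead argue globally with the algebraic space $\uIsom_\cX(f,g)=\cX\times_{\cY\times\cY}\cY$: separatedness plus the standing finite-inertia hypothesis makes $\Delta_\cY$ finite, so $p$ is finite; $\beta$ gives a section over $\cU$, its scheme-theoretic closure $\cZ$ is finite over $\cX$ and an isomorphism over the dense open $\cU$, and the normality of $\cX$ together with Zariski's Main Theorem (checked on a normal smooth presentation, using that scheme-theoretic closure commutes with the flat base change $V\to\cX$ and that every component of $\cZ$ meets $p^{-1}(\cU)$) forces $\cZ\xrightarrow{\sim}\cX$, yielding the extension; uniqueness follows since the equalizer of two sections is closed and contains the schematically dense $\cU$ (here reducedness of the normal $\cX$ is what you are implicitly using). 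What each approach buys: the paper's reduction is short and leans on the established DM-case argument, but it genuinely uses the tame local structure theorem; your proof is self-contained, avoids tameness of $\cX$, $\cU$, $\cY$ altogether, and in fact establishes the statement for any separated algebraic stack $\cY$ with finite inertia and any normal algebraic stack $\cX$, which is strictly more general than the stated proposition. The noetherian-type hypotheses needed for scheme-theoretic images and for ZMT are supplied by the paper's standing conventions (everything locally of finite type over an excellent scheme), so your "only technical hurdle" is indeed routine in this setting.
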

        \begin{proof}
            By \cite[Theorem 3.2]{AbramovichOlssonVistoliTame}, a tame stack is étale locally a quotient of a scheme by a finite, flat, linearly reductive group scheme. Therefore, the same argument in the proof of \cite[Proposition 1.2]{FantechiMannNironi} still applies.
        \end{proof}
        
        We may also assume that $X$ is irreducible and of dimension $2$. Indeed, since $X$ is regular, it is analytically unibranch. Thus, irreducible components of $X$ are pairwise disjoint. Since $X$ is a disjoint union of irreducible components, we may work on each irreducible component of $X$ to obtain morphisms from regular stacks birational to the irreducible components and glue them all at the end. $X$ may have lower dimensional componenets as well. But there is nothing to do with $0$-dimensional components, and we may deal with $1$-dimensional components by the similar arguments to dimension $2$ case using \cite[Theorem 3.1]{BrescianiVistoliValCrit} as we have seen in Section \ref{sec:intro}.
       
       Now $X \setminus U$ consists of finitely many irreducible components. Note that it cannot have codimension $0$ irreducible components as $X$ is irreducible. So irreducible components of $X \setminus U$ can be either of codimension $1$, that are irreducible curves, or codimension $2$, that are closed points. We will deal with codimension $1$ components and codimension $2$ components separately, first extending the given morphism $U \to \cM$ to codimension $2$ components of $X \setminus U$ and then to codimension $1$ components after taking blow-ups and root stacks.
        
        \subsection{Proof of Theorem \ref{thm:main} (Extension to codimension 2 components)}\label{subsec:proof-codim2} We first resolve the indeterminacy at the codimension $2$ components using the Purity Lemma (\cite[Lemma 4.6]{AbramovichOlssonVistoliTwisted} or Lemma \ref{lem:purity-higher-dimensional-I}). Since $X \setminus U$ has finitely many irreducible components and an open subscheme of a regular scheme is still regular, we can extend the morphism $U \to \cM$ up to the complement of the union of the codimension $1$ irreducible components of $X \setminus U$ in $X$ by applying the Purity Lemma, without taking any blow-ups or root stacks. Thus, we may reduce to the case where $X \setminus U$ is a union of finitely many irreducible curves. 
        
        \subsection{Proof of Theorem \ref{thm:main} (Extension to codimension 1 components)}\label{subsec:proof-codim1}
        Let $X \setminus U$ be a union of finitely many irreducible curves in $X$. By \cite[Corollary 0.4]{CossartJannsenSaito}, there exists a projective surjective morphism $\pi:X' \to X$ which is an isomorphism over $U$, such that $\pi^{-1}(X \setminus U)$, with the reduced induced closed subscheme structure, is a simple normal crossings divisor on $X'$. Furthermore, the morphism $\pi:X' \to X$ is a finite sequence of blow-ups at a closed point. We briefly explain the construction of $\pi:X' \to X$ in our case following the arguments in \cite{CossartJannsenSaito}.
        
        Let $Z = X \setminus U$. By \cite[Theorem 0.3]{CossartJannsenSaito}, there exists a projective surjective morphism $\pi:X' \to X$ which is a finite sequence of blow-ups $X' = X_m \to X_{m-1} \to \cdots \to X_1 \to X_0 = X$ and $Z' = Z_m \to Z_{m-1} \to \cdots \to Z_1 \to Z_0 = Z$ where $X_{i+1} \to X_i$ and $Z_{i+1} \to Z_i$ are the blow-ups along a permissible center $P_i \subseteq (Z_i)_{\text{Sing}}$ in the sense of \cite[Definition 2.1]{CossartJannsenSaito}. So $X_{i+1}$ is regular and $Z_{i+1}$ is the strict transform of $Z_i$ under $X_{i+1} \to X_i$. Moreover, since $Z$ is a finite union of irreducible curves, $P_i$ must be of dimension $0$. So we can construct $\pi:X' \to X$ with each $X_{i+1} \to X_i$ a blow-up at a closed point. Starting with $B_0 = \emptyset$, let $B_{i+1}$ be the complete transform of $B_i$ under the blow-up $X_{i+1} \to X_i$, i.e., the union of the strict transform of $B_i$ in $X_{i+1}$ and the exceptional divisor of the blow-up $X_{i+1} \to X_i$. Then $B' = B_m$ is a simple normal crossings divisor on $X'$ and intersects $Z'$ transversally on $X'$. So $\pi^{-1}(Z) = Z' \cup B'$ is a simple normal crossings divisor on $X'$ as desired.
        
        Identifying $\pi^{-1}(U)$ with $U$, let $X' \setminus U = \pi^{-1}(X \setminus U)_{\text{red}} = \cup_{i=1}^N D_i$ where $D_i$'s are irreducible components of the simple normal crossings divisor $X' \setminus U$ in $X'$. We will set $\cX = \sqrt[\bm{r}]{\bm{D}/X'}$ where $\bm{D} = (D_1,\cdots,D_N)$ for certain $N$-tuple of positive integers $\bm{r}$ and construct a morphism $\cX \to \cM$. We first consider the following series of lemmas. 
        
        \begin{lemma}\label{lem:quotient-stack-limit}
                Let $I$ be a filtered set and $\{T_i,f_{ij}\}$ an inverse system of affine schemes over $I$. Assume that a smooth group scheme $G$ acts on $T_i$'s and transition maps are all $G$-equivariant so that $G$ also acts on $T$, and projection maps $T \to T_i$ are also $G$-equivariant. Then, 
                \begin{align*}
                    [T/ G] = \lim [T_i/G]
                \end{align*}
                in the category of algebraic stacks.
            \end{lemma}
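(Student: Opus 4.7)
The approach is to verify the claimed equality by comparing the two stacks on the functor of points. For an affine test scheme $S$, the groupoid of $S$-points of $[T/G]$ consists of pairs $(P,\phi)$, where $P \to S$ is a $G$-torsor (in the fppf or smooth topology, which coincide since $G$ is smooth) and $\phi : P \to T$ is a $G$-equivariant morphism over $S$. Since the transition maps $f_{ij} : T_j \to T_i$ are $G$-equivariant, they induce morphisms of stacks $F_{ij} : [T_j/G] \to [T_i/G]$, so the $S$-points of the $2$-limit $\lim_i [T_i/G]$ form the groupoid of compatible systems $\{(P_i,\phi_i,\alpha_{ij})\}$, where $(P_i,\phi_i)$ is an $S$-point of $[T_i/G]$ and $\alpha_{ij} : P_j \xrightarrow{\sim} P_i$ is a torsor isomorphism satisfying $\phi_i \circ \alpha_{ij} = f_{ij}\circ \phi_j$ together with the cocycle condition $\alpha_{ik} = \alpha_{ij}\circ \alpha_{jk}$.

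I would first construct the comparison functor $[T/G](S) \to \lim_i [T_i/G](S)$ by sending $(P,\phi)$ to the system $\{(P,\,p_i\circ\phi,\,\mathrm{id}_P)\}$, where $p_i : T \to T_i$ are the canonical projections; the required compatibility and cocycle conditions are immediate. Equivalently, this comparison arises from the universal property of the $2$-limit applied to the compatible family of pushforward morphisms $[T/G] \to [T_i/G]$ induced by the $G$-equivariant maps $T \to T_i$. For essential surjectivity, I would start from a compatible system $\{(P_i,\phi_i,\alpha_{ij})\}$ and use the torsor isomorphisms $\alpha_{ij}$, together with the filteredness of $I$, to canonically identify all of the $P_i$'s with a single $G$-torsor $P \to S$; the maps $\phi_i : P \to T_i$ then form a compatible family and assemble, by the universal property of $T = \lim T_i$ in affine schemes, into a single $G$-equivariant morphism $\phi : P \to T$. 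Full faithfulness reduces to the observation that an isomorphism in $[T/G](S)$ is a torsor isomorphism commuting with the map to $T$, and, via the projections $p_i$, such data are exactly isomorphisms in each $[T_i/G](S)$ compatible with all of the $\alpha_{ij}$.

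The main technical point will be checking that the inverse construction on the level of groupoids is truly well-defined and $2$-functorial: one must verify that the identification of the torsors $P_i$ with a single $P$ uses only the cocycle condition (so no choice-dependent coherence issues appear) and that the resulting pair $(P,\phi)$ is independent, up to unique isomorphism, of the auxiliary choice of cofinal index used to anchor the identification. The filteredness of $I$ is essential here, as is the fact that filtered limits of affine schemes exist and commute with the product by the fixed group scheme $G$, giving the expected $G$-action on $T$. Smoothness of $G$ ensures that $[T/G]$ and each $[T_i/G]$ are algebraic stacks with smooth presentations $T \to [T/G]$ and $T_i \to [T_i/G]$, so the identification indeed lives in the category of algebraic stacks, as claimed.
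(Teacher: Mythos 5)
Your argument is correct, and it shares the mathematical core of the paper's proof --- describing maps into $[T_i/G]$ as $G$-torsors with equivariant maps, using filteredness together with the cocycle condition to rigidify the system of torsors $P_i$ into a single torsor $P$, and invoking the universal property of $T=\lim T_i$ among affine schemes to assemble the equivariant maps $P \to T_i$ into $P \to T$ --- but it packages this differently. You check that the comparison functor $[T/G](S)\to \lim_i [T_i/G](S)$ is an equivalence of groupoids for each (affine) test scheme $S$, implicitly using that $2$-limits of stacks are computed section-wise, and then observe that since $[T/G]$ is algebraic, being the limit in the $2$-category of all stacks yields the limit in the full sub-$2$-category of algebraic stacks. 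The paper instead verifies the universal property directly against an arbitrary algebraic stack $\cX$: it chooses a smooth presentation $U\to\cX$, runs essentially your essential-surjectivity argument over $U$ (common upper bounds in $I$ identify the pulled-back torsors), and then descends the resulting morphism $U\to[T/G]$ along $U\times_\cX U\rightrightarrows U$, which requires an extra étale presentation of the torsor over $U\times_\cX U$ and the uniqueness part of the universal property of $T$. Your route buys a cleaner statement (the pointwise equivalence gives the limit among all stacks and avoids the descent step), at the cost of leaning on the standard facts about pointwise limits of stacks and on carrying out carefully the anchoring of the $P_i$ to one torsor over a filtered poset with no initial element --- a point you correctly single out as the main technical step, and which is handled exactly as in the paper via common upper bounds and the cocycle identity; the paper's form of the argument has the advantage that the universal property tested against algebraic stacks is literally what is used later (e.g.\ in the proof of Lemma \ref{lem:loc-fp}, where the test object is a quotient stack).
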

            \begin{proof}
                Since $G$ is smooth, quotient stacks $[T_i/G]$ and $[T/G]$ have smooth presentations by affine schemes, namely, $T_i \to [T_i/G]$ and $T \to [T/G]$. So the quotient stacks we are considering are all algebraic. For each $i$, a $G$-equivariant morphism $T \to T_i$ induces a moprhism $T/G \to T_i/G$ of affine schemes and hence a morphism $[T/G] \to [T_i/G]$ of quotient stacks. Similarly, we have induced  morphisms $\td{f}_{ij}:[T_j/G] \to [T_i/G]$ for $i \le j$. Note that each $\td{f}_{ij}$ is affine and, in particular, representable, since the diagram
                \begin{center}
                    \begin{tikzcd}[column sep=small]
                        T_j \ar[r] \ar[d] \ar[rd,phantom,"\square"]& T_i \ar[d]\\
                        \left[T_j/G\right] \ar[r]& \left[T_i/G\right]
                    \end{tikzcd}
                \end{center}
                is Cartesian and $T_j \to T_i$ is affine. Thus, $\{[T_i/G],\td{f}_{ij}\}$ is an inverse system over $I$ with affine transition maps.

                We claim that $[T/G]$ satisfies the universal property of the limit in the category of algebraic stacks. Suppose that we are given a collection of morphisms $\cX \to [T_i/G]$ from an algebraic stack $\cX$ that are compatible with transition maps. Let $U \to \cX$ be a smooth presentation of $\cX$ and let $i,j \in I$ with $i \ne j$ be given. Since $I$ is filtered, there exists $k \in I$ such that $i \le k$ and $j \le k$. Consider the Cartesian squares
                \begin{center}
                    \begin{tikzcd}[column sep=scriptsize]
                        P_k \arrow[r] \arrow[d] \ar[rd,phantom,"\square"]& T_k\ar[d]\\
                        U \ar[r] & \left[T_k/G\right]
                    \end{tikzcd}
                    \quad
                    \begin{tikzcd}[column sep=scriptsize]
                        P_i \arrow[r] \arrow[d] \ar[rd,phantom,"\square"]& T_i \ar[d]\\
                        U \ar[r]& \left[T_i/G\right]
                    \end{tikzcd}
                \end{center}
                corresponding to the morphisms $U \to \cX \to [T_k/G]$ and $U \to \cX \to [T_i/G]$, respectively. Since the morphisms $\cX \to [T_i/G]$ are compatible with transition maps, we also have a diagram
                \begin{center}
                \begin{tikzcd}[column sep=scriptsize]
                      P_k \ar[r] \ar[d] \ar[rd,phantom,"\square"]& T_k \arrow[r] \arrow[d] \ar[rd,phantom,"\square"] & T_i \ar[d]\\
                      U \ar[r]& \left[T_k/G\right] \ar[r] & \left[T_i/G\right]
                \end{tikzcd}  
                \end{center}
                where the squares are Cartesian. Therefore, $P_k \to U$ and $P_i \to U$ are isomorphic principal $G$-bundles and the isomorphism between them commutes with the $G$-equivariant maps $P_k \to T_k$ and $P_i \to T_i$. Applying the same argument on morphisms $U \to \cX \to [T_k/G]$ and $U \to \cX \to [T_j/G]$, we may assume that all $\cX \to [T_i/G]$ correspond to the same principal $G$-bundle $P \to U$ with the $G$-equivariant maps $P \to T_i$ that are compatible with transition maps. So we also obtain a $G$-equivariant map $P \to T$ and thus a morphism $U \to [T/G]$ commuting with the given morphisms. Note that this morphism is uniquely determined by construction up to an isomorphism.

                Moreover, it uniquely factors through $\cX$ by descent. Indeed, consider the diagram
                \begin{center}
                    \begin{tikzcd}[column sep=small]
                        U \times_\cX U \arrow[r,shift left] \ar[r,shift right] & U \arrow[r] \ar[d]& \cX \ar[d]\\
                        &  \left[ T/G \right] \ar[r] & \left[ T_i/G \right]
                    \end{tikzcd}
                \end{center}
                which corresponds to the diagram
                \begin{center}
                    \begin{tikzcd}[column sep=small]
                       V \ar[r] &P' \ar[r,shift left] \ar[r,shift right] \ar[d] \ar[rd,phantom,"\square"] & P \ar[r]\ar[d] & T \ar[r]& T_i \\
                       & U \times_\cX U \ar[r,shift left] \ar[r,shift right] & U &&
                    \end{tikzcd}
                \end{center}
                where vertical maps are principal $G$-bundles and $V \to P'$ is an étale presentation of the algebraic space $P'$. Since $U \to [T_i/G]$ factors through $\cX$, two compositions of the top horizontal maps coincide for all $i$. By the universal property of the limit, these two maps uniquely factors through $T$. That is, two compositions $V \to P' \rightrightarrows P \to T$ are equal so that $U \times_\cX U \rightrightarrows U \to [T/G]$ are the same as well. By descent, $U \to [T/G]$ uniquely factors through $\cX$. 

                Therefore, $[T/G]$ satisfies the universal property of the limit in the category of algebraic stacks and we get the desired result.
        \end{proof}

        \begin{remark}\label{rmk:root-stack-as-a-limit}
            Let $X=\Spec R$ with $R$ a regular local ring. Let $\bm{D} = (D_1,\cdots,D_N)$ be effective Cartier divisors on $X$ such that $\cap_{i=1}^N D_i$ is nonempty. Pick $x \in \cap_{i=1}^N D_i$. Let $Y = \Spec \cO_{X,x}$ and let $\bm{D'}=(D_1',\cdots,D_N')$ be the effective Cartier divisors on $Y$ which are pullbacks of $\bm{D}=(D_1,\cdots,D_N)$ to $Y$. Then Lemma \ref{lem:quotient-stack-limit} implies that
            \begin{align*}
                \sqrt[\bm{r}]{\bm{D'}/Y} = \lim_{x \in V} \sqrt[\bm{r}]{\bm{D}_V/V}
            \end{align*}
            where $\bm{D}_V = (D_1 \cap V,\cdots,D_N \cap V)$ and the limit runs over all open neighborhoods $V\subseteq X$ of $x$, by using the smooth presentation of root stacks we have seen in Remark \ref{rmk:root-stack-local-description}.
        \end{remark}
                
        It is known that if $\cM$ is an algebraic stack locally of finite presentation over a scheme $S$ and $T$ is an affine scheme over $S$ which is a limit $T=\lim T_i$ of a filtered inverse system of affine schemes $T_i$ over $S$, then the natural map
        \begin{align*}
            \colim \Mor_S(T_i,\cM) \to \Mor_S (T,\cM)
        \end{align*}
        is an isomorphism (For example, see \cite[Tag \spref{0CMX}]{stacks-project}). Using Lemma \ref{lem:quotient-stack-limit}, we can slightly improve this as follows.

        \begin{lemma}\label{lem:loc-fp} Let $\cM$ be an algebraic stack locally of finite presentation over a scheme $S$ and $G$ a smooth affine group scheme over $S$. Let $I$ be a filtered set and $\{T_i,f_{ij}\}$ an inverse system of affine schemes over $S$ with $T = \lim T_i$, where $T_i$'s are equipped with $G$-actions and transition maps are $G$-equivariant. Then there exists a natural map
        \begin{align*}
            \colim \Mor_S([T_i/G],\cM) \to \Mor_S ([T/G], \cM)
        \end{align*}
        which is an isomorphism.
        \end{lemma}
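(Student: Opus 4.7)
The plan is to combine smooth descent along the atlas $T \to [T/G]$ with the standard limit result for morphisms into a stack locally of finite presentation (\cite[Tag \spref{0CMX}]{stacks-project}): if $\cM$ is locally of finite presentation over $S$ and $\{Y_\alpha\}$ is a filtered inverse system of quasi-compact, quasi-separated $S$-schemes with affine transition maps and limit $Y$, then $\colim \Mor_S(Y_\alpha, \cM) \to \Mor_S(Y, \cM)$ is an equivalence of groupoids. Since $G$ is affine over $S$ and products commute with limits, this applies to the three inverse systems $\{T_i\}$, $\{G \times T_i\}$, and $\{G \times G \times T_i\}$, whose limits are $T$, $G \times T$, and $G \times G \times T$ respectively. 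The natural map in the lemma sends a morphism $\phi_i \colon [T_i/G] \to \cM$ to its pullback $\phi_i \circ \pi_i$ along the canonical morphism $\pi_i \colon [T/G] \to [T_i/G]$ induced by $T \to T_i$.

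To show this map is an equivalence, I use smooth descent along the presentations $T_i \to [T_i/G]$ and $T \to [T/G]$. Since $T \times_{[T/G]} T \simeq G \times T$ and the triple fiber product is $G \times G \times T$ (and analogously at each finite index), a morphism $[T/G] \to \cM$ is equivalent to a descent datum $(a, \sigma, \gamma)$ consisting of a morphism $a \colon T \to \cM$, a 2-isomorphism $\sigma$ on $G \times T$ between the two pullbacks of $a$ via the projection and action, and a cocycle condition $\gamma$ on $G \times G \times T$; the same description holds at each finite index, and the natural map of the lemma is the one induced on descent data by pullback along $T \to T_i$.

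For essential surjectivity, given a descent datum $(a, \sigma, \gamma)$ on $T$, the limit result for $\{T_i\}$ descends $a$ to $a_j \colon T_j \to \cM$ for some $j \in I$; the limit result for $\{G \times T_i\}$ descends $\sigma$ to $\sigma_k$ on $G \times T_k$ for some $k \geq j$; and the limit result for $\{G \times G \times T_i\}$, applied to the equality of 2-morphisms expressing the cocycle (using full faithfulness of the equivalence of groupoids), shows that the cocycle already holds on $G \times G \times T_\ell$ for some $\ell \geq k$. This yields a descent datum on $T_\ell$, hence a morphism $[T_\ell/G] \to \cM$ whose pullback to $[T/G]$ recovers the original. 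Faithfulness is analogous: a 2-isomorphism between two morphisms $[T_i/G], [T_j/G] \to \cM$ whose pullbacks to $[T/G]$ agree corresponds under descent to a 2-isomorphism between the underlying morphisms $T \to \cM$ compatible with the descent data, and by the same limit principle both the 2-isomorphism and its compatibility on $G \times T$ descend to some common index in $I$.

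The main technical obstacle is purely bookkeeping across the three levels of descent data (morphism, 2-isomorphism, cocycle) and ensuring we can pass to a common index, which is handled by filteredness of $I$. Conceptually, the proof is just the observation that a descent datum consists of finitely many pieces of information, each of which descends to some finite stage by the scheme-level limit result applied to an affine scheme.
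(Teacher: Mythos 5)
Your argument is correct, and it uses the same two ingredients as the paper's proof: smooth descent along the atlases $T_i \to [T_i/G]$ and $T \to [T/G]$, and the limit result \cite[Tag \spref{0CMX}]{stacks-project} applied to the affine schemes $T_i$, $G\times_S T_i$ (and in your case also $G\times_S G\times_S T_i$, all affine because $G\to S$ is affine and limits commute with fiber products). The packaging, however, is genuinely different. The paper first proves that $[T/G]=\lim [T_i/G]$ in the category of algebraic stacks (Lemma \ref{lem:quotient-stack-limit}), uses this to set up the projections $\pi_i$, and then proves injectivity and surjectivity by chasing the principal $G$-bundle diagrams: morphisms out of $[T_i/G]$ and $[T/G]$ are compared via equalities of the compositions $G\times_S T \rightrightarrows T \to \cM$ on the atlas, with no explicit mention of $2$-isomorphisms or of the cocycle condition on $G\times_S G\times_S T$. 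You instead unpack a morphism $[T/G]\to\cM$ into a full descent datum $(a,\sigma,\gamma)$ and descend each layer separately to a finite index, using filteredness to collect the indices. Your route buys a cleaner and more complete treatment of the $2$-categorical bookkeeping (the $2$-isomorphism $\sigma$ and the cocycle identity, handled via fullness and faithfulness at the levels $G\times_S T_i$ and $G\times_S G\times_S T_i$), and it yields the statement as an equivalence of groupoids rather than a bijection of sets; it also does not logically require the auxiliary limit description of $[T/G]$, since the projections $\pi_i$ exist directly from the $G$-equivariant maps $T\to T_i$. The paper's route is terser and reuses Lemma \ref{lem:quotient-stack-limit}, which it needs anyway for Remark \ref{rmk:root-stack-as-a-limit}, but your explicit handling of the cocycle level fills in a step that the paper's proof leaves implicit.
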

        \begin{proof}
            As we have seen in the proof of Lemma \ref{lem:quotient-stack-limit}, $f_{ij}$'s induce morphisms $\td{f}_{ij}:[T_j/G] \to [T_i/G]$ for $i \le j$ so that $\{ [T_i/G],\td{f}_{ij} \}$ is a filtered inverse system over $I$ and $[T/G] = \lim [T_i/G]$ in the category of algebraic stacks. So we have a natural map as above. Let $\pi_i: [T/G] \to [T_i/G]$ be projections. Then the above map sends a representative $\phi_i:[T_i/G] \to \cM$ of an element of $\colim \Mor_S([T_i/G],\cM)$ to the composition $\phi_i \pi_i$. Note that the composition doesn't depend on the choice of a representative. Indeed, if $\phi_j:[T_i/G] \to \cM$ is another representative, there exists $k \in I$ such that $i \le k$ and $j \le k$ since $I$ is filtered. Then $\phi_i \pi_i = \phi_i  \td{f}_{ik}  \pi_k = \phi_k\pi_k$ and similarly, $\phi_j \pi_j = \phi_k \pi_k$. So we have $\phi_i\pi_i = \phi_j\pi_j$ and the natural map $\colim \Mor_S([T_i/G],\cM) \to \Mor_S ([T/G], \cM)$ is well-defined.
            
            We first prove the injectivity. Suppose we have representatives $\phi_i:[T_i/G]\to \cM, \psi_j:[T_j/G] \to \cM$ of two elements in $\colim \Mor_S([T_i/G],\cM)$ such that $\phi_i \pi_i = \psi_j \pi_j$. Choose $k \in I$ such that $i \le k$ and $j \le k$ and consider the following commutative diagram
            \begin{center}
                \begin{tikzcd}
                    G \times_S T \ar[r,shift left] \ar[r,shift right] \ar[d] \ar[rd,phantom,"\square"]& T \ar[r] \ar[d] \ar[rd,phantom,"\square"] & \left[ T/G \right] \ar[d,"\pi_k"] \\
                    G \times_S T_k \ar[r,shift left] \ar[r,shift right] & T_k \ar[r] \ar[rd,shift left] \ar[rd,shift right]& \left[ T_k/G \right] \ar[d,shift left,"\psi_k"] \ar[d,shift right,"\phi_k"']\\
                    & & \cM
            \end{tikzcd}
            \end{center}
            where the squares are all Cartesian. Then $\phi_k\pi_k = \psi_k \pi_k$ so that the four compositions $G \times_S T \rightrightarrows T \to T_k \rightrightarrows \cM$ are all equal. Denote this map by $\theta$. Note that $G \times_S T = \lim (G \times_S T_i)$ as fiber product is a limit and limits commute with limits. Since all $G \times_S T_i$ and $G \times_S T$ are affine schemes, we have $\colim \Mor_S(G \times_S T_i,\cM) \simeq \Mor(G \times_S T,\cM)$ and hence we can find a representative $\theta_l:G \times_S T_l \to \cM$ of the element in $\colim \Mor_S(G \times_S T_i, \cM)$ corresponding to $\theta$. If we choose $m \in I$ such that $k \le m$ and $l \le m$, then we have a commutative diagram as above with index $m$ and the four compositions $G \times_S T_m \rightrightarrows T_m \rightrightarrows \cM$ must be all equal to $\theta_m$ since precomposing them with $G \times_S T \to G \times_S T_m$ all give $\theta$. By descent, we have $\phi_m = \psi_m$ so that two representatives we have chosen at the beginning represent the same element in $\colim \Mor_S([T_i/G],\cM)$.
            
            For the surjectivity, suppose we are given a morphism $\phi:[T/G] \to \cM$ and consider the following diagram
            \begin{center}
                \begin{tikzcd}
                    G \times_S T \ar[r,shift left] \ar[r,shift right] \ar[d] \ar[rd,phantom,"\square"]& T \ar[r] \ar[d] \ar[rd,phantom,"\square"] & \left[ T/G \right] \ar[d,"\pi_i"] \ar[dd,bend left = 40, "\phi"] \\
                    G \times_S T_i \ar[r,shift left] \ar[r,shift right] & T_i \ar[r] \ar[rd]& \left[ T_i/G \right] \\
                    & & \cM
                \end{tikzcd}
            \end{center}
             which holds for all $i \in I$ with the squares all Cartesian. Fix $i \in I$. From the diagram, we see that two compositions $G \times_S T \rightrightarrows T \to T_i \to \cM$ are equal. Denote this map by $\theta$. As in the proof of the injectivity, we can find a representative $\theta_j:G \times_S T_j \to \cM$ of an element in $\colim \Mor_S(G \times_S T_i,\cM)$ corresponding to $\theta$. Then two compositions $G \times_S T_j \rightrightarrows T_j \to \cM$ must be the same since precomposing them with $G \times_S T \to G\times_S T_j$ all give $\theta$. By descent, we get a morphism $\phi_j:[T_j/G] \to \cM$ and an element of $\colim \Mor_S([T_i/G],\cM)$ represented by $\phi_j$ is mapped to $\phi$ under the natural map $\colim \Mor_S([T_i/G],\cM) \to \Mor_S ([T/G], \cM)$.
        \end{proof}
        
            Now we finish the proof of Theorem \ref{thm:main}. Continuing the discussion at the beginning of this section, replace $X$ by $X'$ which is obtained by a finite sequence of blow-ups and assume that $X \setminus U = \cup_{i=1}^N D_i$ is a simple normal crossings divisor with irreducible components $D_i$. 
            
            For each $i = 1,\cdots, N$, let $\eta_i \in X$ be the generic point of $D_i$. Then we have a following commutative diagram
            \begin{center}
                \begin{tikzcd}
                  \Spec K \arrow[d,hook] \arrow[r] \ar[rd,phantom,"\square"] &  U \arrow[r] \arrow[d,hook]& \cM \arrow[d] &\\
                 \Spec \cO_{X,\eta_i} \arrow[r]& X \arrow[r]& M. &
                \end{tikzcd}
            \end{center}
            where the left square is Cartesian, $K$ is the function field of $X$, and $\cM \to M$ is a coarse moduli space of the tame stack $\cM$. By the valuative criterion for proper morphisms of tame stacks \cite[Theorem 3.1]{BrescianiVistoliValCrit}, there exists a minimal positive integer $r_i$ which gives a representable lifting $\sqrt[r_i]{D_i'/\Spec \cO_{X,\eta_i}} \to \cM$ of the morphism $\Spec \cO_{X,\eta_i} \to X \to M$ where $D_i'$ is the pullback of $D_i$ in $\Spec \cO_{X,\eta_i}$. By the observation in the Remark \ref{rmk:root-stack-as-a-limit}, $\sqrt[r_i]{D_i'/\Spec \cO_{X,\eta_i}} = \lim_{\eta_i \in V} \sqrt[r_i]{D_i \cap V/V}$ where the limit runs over all open neighborhoods $V\subseteq X$ of $\eta_i$. Then, by applying Lemma \ref{lem:loc-fp} to the smooth presentation of root stacks, we obtain a morphism $\cV_i:=\sqrt[r_i]{D_i \cap V_i/V_i} \to \cM$, for some open neighborhood $V_i \subseteq X$ of $\eta_i$ . Shrinking $V_i$ if necessary, we may assume that $V_i \cap D_j =  \emptyset$ for all $j \ne i$.
                     
            Running this algorithm on all $D_i$'s, we get a collection of morphisms $\{\cV_i \to \cM\}_{i = 1}^N$ and an $N$-tuple of positive integers $\bm{r} =(r_1,\cdots,r_N)$ with each $r_i$ minimally chosen for $D_i$. We set $\cX = \sqrt[\bm{r}]{\bm{D}/X}$ where $\bm{D} = (D_1,\cdots,D_N)$. Each $\cV_i$ is an open substack of $\cX$, and we can identify $U$ as an open substack of $\cX$ as well since $\cX \to X$ is an isomorphism over $U$, in particular, $U$ has no stacky points. Then, the morphisms $\cV_i \to \cM$ and $U \to \cM$ agree on the overlaps since the overlaps of $\cV_i$'s are also contained in $U$ and their restrictions to the overlaps are equal to the restriction of the given morphism $U \to \cM$. Thus, they glue to a morphism $\cV:=(\cup_i \cV_i) \cup U \to \cM$.
            
            Then $\cV$ is a dense open substack of $\cX$ whose complement consists of finitely many closed points. Indeed, if $\pi: \cX \to X$ is the canonical morphism and $\cD_i = \pi^{-1}(D_i)_\text{red}$, then each $\cV_i$ contains the generic point of $\cD_i$. Each $\cD_i$ is regular, of dimension $1$ and furthermore, quasi-compact since we assumed $X$ is noetherian. Hence, each $\cD_i\setminus \cV$ consists of finitely many closed points and the same holds for $\cX \setminus \cV$. Since $\cX$ is tame (Remark \ref{rmk:root-stacks-are-tame}) and regular (Proposition \ref{prop:regularity-of-root-stack}), the morphism $\cV \to \cM$ extends to a morphism $\cX \to \cM$ by the Purity Lemma (Lemma \ref{lem:purity-tame-stack}). The uniqueness of $\cX$ and $\cX \to \cM$, as well as the compatibility of $\cX \to  \cM$ follow again from the construction and Proposition \ref{prop:separated} since $\cX$ is tame and regular, and $\cM$ is separated.
            
            \subsection{Sketch of the proof of Theorem \ref{thm:main-threefold}}\label{subsec:proof-threefold}
                As in the proof of Theorem \ref{thm:main}, we may reduce to the case where $X$ is noetherian and irreducible of dimension $3$. By the Purity Lemma (Lemma \ref{lem:purity-higher-dimensional-II} and Remark \ref{rmk:purity-regular}), we may also assume that $X\setminus U$ consists of irreducible components of codimension $1$. Since the indeterminacy locus has dimension $\le 2$, we can resolve its singularities (\cite[Corollary~0.4]{CossartJannsenSaito} or \cite[Theorem~5.9]{CossartJannsenSaito}). That is, there exists a projective, surjective morphism $\pi\colon X'\to X$, which is an isomorphism over $U$, such that $\pi^{-1}(X\setminus U)_{\mathrm{red}}$, with the reduced induced closed subscheme structure, is a simple normal crossings divisor on $X'$. Write $X'\setminus U=\bigcup_{i=1}^N D_i$. As in the proof of Theorem \ref{thm:main}, we construct local extensions of the given rational map from the root stacks of the spectra of the local rings at the generic points of the $D_i$ (which are DVRs). By Lemma \ref{lem:loc-fp}, we can spread out these maps to open neighbourhoods, and these glue to yield a morphism $\cV\to\cM$, where $\cV\subseteq\cX$ is an open dense substack. By the Purity Lemma (Lemma \ref{lem:purity-tame-stack}) again, this extends to a morphism $\cX\to\cM$, and the uniqueness and compatibilities of $\cX\to\cM$ follow from Proposition \ref{prop:separated}.

    \subsection{Sketch of the proof of Theorem \ref{thm:main-higher-dim}}\label{subsec:proof-higher-dim}
        Over a field of characteristic $0$, the reduction to the case where $f\colon\cM\to\cN$ is a coarse moduli space remains valid for arbitrary dimension $n\ge 2$, since the resolution of singularities for the graph $\Gamma$ of a rational map $X\dashrightarrow M_X$ in Section \ref{subsec:proof-reductions} holds for schemes locally of finite type and of arbitrary dimension \cite{Hironaka}. After replacing $f\colon\cM\to\cN$ by its coarse moduli space $f\colon\cM\to M$, we may again reduce to the case where $X$ is noetherian and irreducible of dimension $n$. Then an argument similar to that in Section \ref{subsec:proof-threefold} (the proof of Theorem \ref{thm:main-threefold}), together with embedded resolution of singularities in arbitrary dimension \cite{Hironaka}, proves the theorem.

\section{Applications}\label{sec:applications}

    \subsection{Morphisms at the generic point of regular varieties}\label{subsec:extension-generic}
        In this section, we resolve the indeterminacy of morphisms defined at the generic point of regular surfaces using Theorem \ref{thm:main}, and higher dimensional varieties using Theorems \ref{thm:main-threefold} and \ref{thm:main-higher-dim}.

        \begin{corollary}\label{cor:morphisms-at-the-generic-point} Let $f:\cM \to \cN$ be a morphism of stacks as in Theorem \ref{thm:main} (or Theorem \ref{thm:main-higher-dim}, or a coarse moduli space as in Theorem \ref{thm:main-threefold}, respectively). Let $X$ be a regular separated scheme as in the corresponding theorem, and $K = K(X)$ its function field. Suppose we have a commutative diagram
            \begin{center}
                \begin{tikzcd}
                    \Spec K \arrow[r] \arrow[d]& \cM \arrow[d,"f"] \\
                    X \arrow[r]& \cN
                \end{tikzcd}
            \end{center}
        Then, there exist a regular algebraic stack $\cX$ and a birational morphism $\cX \to X$ which is an isomorphism over $\Spec K$, and a morphism $\cX \to \cM$ which makes the following diagram commute.
            \begin{center}
                \begin{tikzcd}
                   & \Spec K \arrow[ld] \arrow[r] \arrow[d]& \cM \arrow[d,"f"] \\
                \cX \arrow[r] \arrow[rru,dashed] & X \arrow[r]& \cN
                \end{tikzcd}
            \end{center}
        Moreover, the morphism $\cX \to X$ factors as
        \begin{align*}
            \cX = \sqrt[\bm{r}]{\bm{D}/X'} \to X' \to X
        \end{align*}
        where $X' \to X$ is a proper birational morphism with a simple normal crossings divisor $\cup_{i=1}^N D_i$, and $\sqrt[\bm{r}]{\bm{D}/X'} \to X'$ is a root stack morphism with $\bm{D} = (D_1,\cdots,D_N)$ and $\bm{r}$ an $N$-tuple of positive integers. Furthermore, any two such $\cX \to X$ and liftings are dominated by a third.
        \end{corollary}

        \begin{proof}
            Note that $\Spec K$ is not an open subscheme of $X$ so that we cannot apply our main theorems directly. However, we have $K = \cO_{X,\eta} = \colim_{\eta \in U}\cO_X(U)$ where $\eta \in X$ is the generic point and the limit runs over the affine open neighborhood of $\eta$. Hence, $\Spec K = \lim_{\eta \in U} U$ and by Lemma \ref{lem:loc-fp}, we can find a representative $U \to \cM$ of an element in $\colim_{\eta \in U} \Mor_S(U,\cM)$ corresponding to the morphism $\Spec K \to \cM$. Thus, we have a commutative diagram
            \begin{center}
                \begin{tikzcd}
                    U \ar[rr,bend left=20] \ar[d,hook]& \Spec K \ar[l] \ar[r] \ar[d]& \cM \ar[d]\\
                    X \ar[r,equal]& X \ar[r]& \cN
                \end{tikzcd}
            \end{center}
            Then, by Theorem \ref{thm:main} (resp. Theorems \ref{thm:main-threefold}, \ref{thm:main-higher-dim}), there exists a regular algebraic stack $\cX$ with morphisms $\cX \to X$, which is an isomorphism over $U$, and $\cX \to \cM$ fitting into the diagram. Moreover, the morphism $\cX \to X$ factors as $\cX = \sqrt[\bm{r}]{\bm{D}/X'} \to X' \to X$ where $X' \to X$ is a proper birational map, $\bm{r}$ is an $N$-tuple of positive integers and effective Cartier diviosrs $\bm{D} = (D_1,\cdots,D_N)$ form a simple normal crossings divisor on $X'$.

            If we have another data of a lifting $\cX'\to \cM$ constructed from a different representative $U' \to \cM$, then we can construct a lifting $\cX'' \to \cM$ dominating the two from the representative $U \cap U' \to \cM$.
        \end{proof}

    \subsection{Extension of torsors}\label{subsec:extension-of-torsors}
    Let $R$ be a DVR with quotient field $K$. Let $X$ be a finite type scheme faithfully flat over $R$, and $G$ a finite group scheme over $K$. Given a $G$-torsor $P \to X_K$, the problem of finding an $R$-model $\cG$ of $G$ and a $\cG$-torsor $\cP \to X$ extending $P \to X_K$ is studied in various settings since Grothendieck in \cite[Exposé X]{SGA1} for $G$ a constant finite group whose order is prime to the residue characteristic of $R$. See \cite{AnteiEmsalem}, \cite{HaiDosSantos}, \cite{Mehidi1}, and \cite{Mehidi2} for other results and a brief history of the problem. In particular, it was proved in \cite{Mehidi2} that when $G$ is a finite commutative group scheme over $K$, $G$-torsors over the generic locus of a regular curve $\cC$ over $R$ can be extended to a $\cG$-torsor over $\cC$ for some $R$-model $\cG$ of $G$.
 
 Our main theorem directly applies to extend $G$-torsors when $G$ is a finite, flat, linearly reductive group over $R$ even if $G$ is not commutative.

    \begin{corollary}\label{thm:extension-of-torsors} Let $R$ be a DVR with quotient field $K$. Let $C$ be a regular curve over $K$, and $\cC$ an $R$-regular model of $C$. Let $G$ be a finite, flat, linearly reductive group over $R$, and let $P \to C$ be an fppf $G_K$-torsor. Then, there exists a regular stack $\td{\cC}$ of dimension $2$ with a proper birational morphism $\td{\cC} \to \cC$, and an fppf $G$-torsor $\cP \to \td{\cC}$ extending $P \to C$. 
    \end{corollary}
    \begin{proof}
        Consider a commutative diagram of solid arrows
        \begin{center}
            \begin{tikzcd}
               & C \ar[r] \ar[d,hook] \ar[ld,hook,dashed]& \cB G_K \ar[r,hook] &\cB G \ar[d]\\
               \td{\cC} \ar[r,dashed] \ar[rrru,dashed]& \cC \ar[rr] & & \Spec R
            \end{tikzcd}
        \end{center}
        where the morphism $C \to \cB G_K$ classifies the given $G_K$-torsor $P \to C$. By Theorem \ref{thm:main}, there exists a regular stack $\td{\cC}$ of dimension $2$ with a proper birational map $\td{\cC} \to \cC$, and a morphism $\td{\cC} \to \cB G$ filling in the diagram. The morphism $\td{\cC} \to \cB G$ classifies a $G$-torsor $\cP \to \td{\cC}$ extending $P \to C$. 
    \end{proof}

    \subsection{Extension of fibrations}\label{subsec:fibrations}
    If $\cM$ is a tame moduli stack, for example, any Deligne–Mumford stack over a field of characteristic $0$ (such as $\overline{\cM}_{g,n}$, moduli of principally polarized abelian varieties, moduli of primitively polarized K3 surfaces, or KSBA moduli of stable varieties/pairs), or \(\overline{\cM}_{1,1}\) over $\bZ[\frac{1}{6}]$, applying our main theorems to the coarse space $\cM \to M$ yields an extension of a fibration over a dense open subset of a regular variety whose fibers are classified by the stack $\cM$. As an illustration, we treat the case of stable elliptic curves $\overline{\cM}_{1,1}$ in Corollary \ref{cor:extension-of-elliptic-fibrations}, and we give an explicit algorithm resolving the indeterminacy of a rational map in Example \ref{ex:family-of-elliptic-curves}.
    
    \begin{corollary}\label{cor:extension-of-elliptic-fibrations} Let $X$ be a regular separated scheme of dimension $\le 3$, (resp. $n \ge 2$), locally of finite type over $\bZ [ \frac{1}{6}]$ (resp. a field $k$ of characteristic $0$). Then, any elliptic fibration over a dense open subscheme $U \subseteq X$ extends to a regular stack $\cX$ with a proper birational map $\cX \to X$ which is an isomorphism over $U$.
    \end{corollary}

    \begin{proof}
        An elliptic fibration over $U \subseteq X$ corresponds to a rational map $X \dashrightarrow \ov{\cM}_{1,1}$ defined on $U$ over the coarse moduli space $\ov{\cM}_{1,1} \to \bP^1$, the $j$-invariant map. Once we resolve the indeterminacy of the rational map $X \dashrightarrow \ov{\cM}_{1,1} \to \bP^1$ of schemes by a finite sequence of blow-ups $X' \to X$, Theorems \ref{thm:main}, \ref{thm:main-threefold}, and \ref{thm:main-higher-dim} applied to the square
        \begin{center}
            \begin{tikzcd}
                U \ar[r] \ar[d,hook]& \ov{\cM}_{1,1} \ar[d]\\
                X' \ar[r] & \bP^1
            \end{tikzcd}
        \end{center}
        give the desired results.
    \end{proof}

    \begin{example}\label{ex:family-of-elliptic-curves}
    Let $k$ be the base field with $\charac k \ne 2,3$. Consider a moduli stack $\ov{\cM}_{1,1}$ of stable genus $1$ curve with a marked point whose coarse moduli space is the projective line $\bP^1$. The canonical morphism $\ov{\cM}_{1,1} \to \bP^1$ sends an isomorphism class of elliptic curves to its $j$-invariant, and stable nodal curves to $\infty \in \bP^1$.
    
    We have a rational map $\bA_{a,b}^2 \dashrightarrow \bP^1$, $(a,b) \mapsto [4a^3:4a^3+27b^2]$ with the indeterminacy locus $\{(0,0)\}$. Since any stable curve in $\ov{\cM}_{1,1}$ has a Weierstrass form, we also have a rational map $\bA_{a,b}^2 \dashrightarrow \ov{\cM}_{1,1}$, $(a,b) \mapsto y^2z = x^3 + axz^2 + bz^3$ with the marked point $[0:1:0]$, whose indeterminacy locus is $\{(0,0)\}$; the points $(a,b)$ on the locus $4a^3 + 27b^2=0$ are mapped to the stable nodal curve with a marked point. These rational maps are compatible with the canonical morphism $\ov{\cM}_{1,1} \to \bP^1$. To apply our algorithm, we first resolve the indeterminacy of the rational map $\bA_{a,b}^2 \dashrightarrow \bP^1$. Denote the divisors $\{4a^3=0\}$ and $\{4a^3+27b^2=0\}$ of $\bA_{a,b}^2$ by $C_0$ and $C_1$.

    Let $\pi_1:X_1 \to \bA_{a,b}^2$ be the blow-up at the origin, and $E_1$ the exceptional divisor. We have $X_1 = \{((a,b),[p:q]) \in \bA_{a,b}^2 \times \bP^1 \mid aq=bp\}$. $X_1$ is covered by two open charts $\{p=1\}$, $\{q=1\}$ with coordinates $(a,q)$, $(b,p)$. $\pi_1^* C_0 = C_0' + 3E_1$ and $\pi_1^* C_1 = C_1' + 2E_1$ where $C_0'$, $C_1'$ are strict transforms of $C_0$, $C_1$, and the canonical sections of the divisors in each chart are as in the table below. A blank indicates that the divisor does not intersect the chart. The linear series $|\langle C_0'+E_1,C_1' \rangle|$ defines a rational map $X_1 \dashrightarrow \bP^1$ whose indeterminacy locus is the base locus consisting of a single closed point $p_1 = ((0,0),[1:0])$ in the $(a,q)$-chart. 
    \begin{center}
        \begin{tabular}{ |c|c|c|c| } 
        \hline
             & $C_0'$ & $C_1'$ & $E_1$\\ 
        \hline
            $(b,p)$ & $4p^3$ & $4bp^3 + 27$ & $b$\\ 
        \hline
            $(a,q)$ & & $4a + 27q^2$ & $a$ \\ 
        \hline
        \end{tabular}
    \end{center}

    Let $\pi_2:X_2 \to X_1$ be the blow-up at $p_1$, and $E_2$ the exceptional divisor. $X_2$ is covered by $(b,p)$-chart and the blow-up $\{((a,q),[u:v])\mid av=qu, [u:v] \in \bP^1\}$ of $(a,q)$-chart. Moreover, the latter is covered by two open charts $\{u=1\}$, $\{v=1\}$ with cooardinates $(a,v)$, $(q,u)$. Since we took blow-up at the point in the $(a,q)$-chart and $C_0$ is contained in the $(b,q)$-chart, $\pi_2^*C_0' = C_0''$ where $C_0''$ is the strict transform of $C_0'$. It follows that $\pi_2^*(C_0'+E_1) = C_0'' + E_1' + E_2$, $\pi_2^* C_1' = C_1'' + E_2$ where $C_1'',E_1'$ are strict transforms of $C_1',E_1$, and the canonical sections of the divisors in each chart are as in the table below. The linear series $|\langle C_0''+E_1',C_1'' \rangle|$ defines a rational map $X_2 \dashrightarrow \bP^1$ whose indeterminacy locus is the base locus consisting of a single closed point $p_2 = ((0,0),[0:1])$ in the $(q,u)$-chart. 
     \begin{center}
        \begin{tabular}{ |c|c|c|c|c| } 
        \hline
             & $C_0''$ & $C_1''$ & $E_1'$ & $E_2$ \\ 
        \hline
            $(b,p)$ & $4p^3$ & $4bp^3 + 27$ & $b$ & \\ 
        \hline
            $(a,v)$ & & $4 + 27av^2$ &  & $a$\\ 
        \hline
            $(q,u)$ & & $4u + 27q$ & $u$ & $q$\\ 
        \hline
        \end{tabular}
    \end{center}

    Lastly, let $\pi_3:X_3 \to X_2$ be the blow-up at $p_2$, and $E_3$ the exceptional divisor. $X_3$ is covered by $(b,p)$, $(a,v)$-charts and the blow-up $\{((q,u),[r:s])\mid qs=ur, [r:s] \in \bP^1\}$ of $(q,u)$-chart at $p_2$. The open set obtained by the blow-up is covered by two open charts $\{r=1\}$, $\{s=1\}$ with coordinates $(q,s)$, $(u,r)$. As in the previous step, we have $\pi_3^*(C_0''+E_1') = C_0''' + E_1'' + E_3$ and $\pi_3^*(C_1'') = C_1''' + E_3$ where $C_0''',C_1''',E_1'',E_2'$ are strict transforms of $C_0'',C_1'',E_1',E_2$, and the canonical sections of the divisors in each chart are given in the table below. The linear series $|\langle C_0'''+E_1'',C_1''' \rangle|$ is base-point free and thus defines a morphism $X_3 \to \bP^1$.
      \begin{center}
        \begin{tabular}{ |c|c|c|c|c|c| } 
        \hline
             & $C_0'''$ & $C_1'''$ & $E_1''$ & $E_2'$ & $E_3$ \\ 
        \hline
            $(b,p)$ & $4p^3$ & $4bp^3 + 27$ & $b$ & &\\ 
        \hline
            $(a,v)$ & & $4 + 27av^2$ &  & $a$ &\\ 
        \hline
            $(q,s)$ & & $4s + 27$ & $s$ &  & $q$\\ 
        \hline
            $(u,r)$ & & $4 + 27r$ &  & $r$ & $u$\\ 
        \hline
        \end{tabular}
    \end{center}
    Now we have the following commutative diagram
    \begin{center}
        \begin{tikzcd}
            & & & & \ov{\cM}_{1,1} \ar[d] \\
            X_3 \ar[rrrru,dashed] \ar[r] \ar[rrrr,bend right = 20]& X_2 \ar[r]& X_1 \ar[r] & \bA_{a,b}^2 \ar[ru,dashed] \ar[r,dashed] & \bP^1
        \end{tikzcd}
    \end{center}
    where the indeterminacy locus of the rational map $X_3 \dashrightarrow \ov{\cM}_{1,1}$ is the preimage of $(0,0) \in \bA_{a,b}^2$, namely, $E_1''\cup E_2' \cup E_3$. In particular, it is a normal crossings divisor in $X_3$, and by choosing appropriate integers $n_1,n_2,n_3$, we can construct a morphism $\sqrt[(n_1,n_2,n_3)]{(E_1'',E_2',E_3)/X_3} \to \ov{\cM}_{1,1}$ as in the Theorem \ref{thm:main}.

   Denoting the composition $X_3 \xrightarrow{\pi_3} X_2 \xrightarrow{\pi_2} X_1 \xrightarrow{\pi_1} \bA_{a,b}^2$ by $\pi$, the sections $\pi^*a, \pi^*b \in \cO_{X_3}$ on each chart are as follows.
    \begin{center}
        \begin{tabular}{ |c|c|c| } 
        \hline
             & $\pi^*a$ & $\pi^*b$ \\ 
        \hline
            $(b,p)$ & $bp$& $b$\\ 
        \hline
            $(a,v)$ & $a$ & $a^2v$ \\ 
        \hline
            $(q,s)$ & $q^2 s$ & $q^3s$ \\ 
        \hline
            $(u,r)$ & $u^2r$& $u^3r^2$\\ 
        \hline
        \end{tabular}
    \end{center}
    Therefore, the rational map $X_3 \dashrightarrow \ov{\cM}_{1,1}$ is defined on each chart by
    \begin{align*}
        (b,p) &\mapsto y^2 z = x^3 + bpxz^2 + bz^3 \\
        (a,v) &\mapsto y^2 z = x^3 + axz^2 + a^2vz^3 \\
        (q,s) &\mapsto y^2 z = x^3 + q^2sxz^2 + q^3sz^3 \\
        (u,r) &\mapsto y^2 z = x^3 + u^2r xz^2 + u^3r^2z^3
    \end{align*}
     Note that two elliptic curves $y^2z = x^3 + axz^2 + bz^3$, $y^2z = x^3 + a'xz^2 + b'z^3$ are isomorphic if and only if $(a',b') = (t^{-4}a,t^{-6}b)$ for some $t$ \cite[Remark on page 627]{EdidinGraham}. From this, we can find $n_1,n_2,n_3$.
     
     For example, $E_1''$ intersects only two charts, $(b,p)$ and $(q,s)$-charts. On $(b,p)$-chart, letting $t = b^{1/6}$, the elliptic curve $y^2z = x^3 + bpxz^2 + bz^3$ is isomorphic to the elliptic curve 
     \begin{align*}
         y^2z = x^3 + b^{-4/6}bp xz^2 + b^{-6/6} bz^3 = x^3 + b^{1/3}pxz^2 + z^3.
     \end{align*} 
     The equation gives an elliptic curve even on $E_1''$, the locus where $b = 0$, and thus we can extend a rational map $\Spec \cO_{X_3,\eta_1} \dashrightarrow \ov{\cM}_{1,1}$ to $\sqrt[6]{\Spec \cO_{X_3,\eta_1}} \to \ov{\cM}_{1,1}$ where $\eta_1$ is the generic point of $E_1''$ by 
     \begin{align*}
         (b,p) \mapsto y^2z = x^3 + b^{1/3}pxz^2 + z^3
     \end{align*}
     
    Using a similar argument, for $\cX = \sqrt[(6,4,2)]{(E_1'',E_2',E_3)/X_3}$, we have a lifting $\cX \to \ov{\cM}_{1,1}$ of $X_3 \to \bP^1$ which resolves the indeterminacy of the rational map $X_3 \to \ov{\cM}_{1,1}$ and the lifting is defined on each chart as follows.
     \begin{align*}
        (b,p) &\mapsto y^2z = x^3 + b^{1/3}pxz^2 + z^3 \overset{\quad t=b^{1/6}}{\sim} y^2 z = x^3 + bpxz^2 + bz^3 \\
        (a,v) &\mapsto y^2z = x^3 + xz^2 + a^{1/2}vz^3 \overset{\quad t=a^{1/4}}{\sim} y^2 z = x^3 + axz^2 + a^2vz^3\\
        (q,s) &\mapsto y^2z = x^3 + s^{1/2}xz^2 + z^3 \overset{\quad t=q^{1/2}s^{1/6}}{\sim} y^2 z = x^3 + q^2sxz^2 + q^3sz^3\\
        (u,r) &\mapsto y^2z = x^3 + xz^2 + r^{1/2}z^3 \overset{\quad t=u^{1/2}r^{1/4}}{\sim} y^2 z = x^3 + u^2r xz^2 + u^3r^2z^3
    \end{align*}

    Moreover, we may produce the weighted blow-up $\cB_{(0,0)}^{(4,6)} \bA^2$ of $\bA^2$ at the origin by taking the weighted blow-down of $\cX$ contracting the divisors $E_1''$ and $E_2$. Indeed, the indeterminacy of the rational map above was also resolved in \cite{Inchiostro} by identifying $\ov{\cM}_{1,1}$ with the weighted projective space $\cP(4,6)$ and taking the weighted blow up $\cB_{(0,0)}^{(4,6)} \bA^2$. He also proved that the morphism $\cB_{(0,0)}^{(4,6)} \bA^2 \to \ov{\cM}_{1,1}$ can be identified with the forgetting morphism $\ov{\cM}_{1,2} \to \ov{\cM}_{1,1}$. 
    
    Note that the morphism $\cX \to \ov{\cM}_{1,1}$ we have constructed is not representable. Indeed, let $P$ be the closed point which is the intersection of $E_1''$ and $E_3$. Since $P \in E_1'' \cap (q,s)$-chart, and $E_1'' = \{s=0\}$ in the $(q,s)$-chart, $P$ is mapped to the point $[0:1]$ in $\ov{\cM}_{1,1}$ under the identification $\ov{\cM}_{1,1} \simeq \cP(4,6)$ with the weighted projective space. Since the stabilizer group of $P$ is $\mu_6 \times \mu_2$ and that of $[0:1]$ is $\mu_6$, we have an induced map $\mu_6 \times \mu_2 \to \mu_6$ of stabilizer groups which cannot be injective. Hence, $\cX \to \ov{\cM}_{1,1}$ is not representable at $P$. Similarly, for the closed point $Q$ which is the intersection of $E_2'$ and $E_3$, we have an induced map $\mu_4 \times \mu_2 \to \mu_4$ of stabilizer groups which cannot be injective. Hence, $\cX \to \ov{\cM}_{1,1}$ is also not representable at $Q$. 

    However, we can take relative coarse moduli space $\cX' \to \ov{\cM}_{1,1}$ of $\cX \to \ov{\cM}_{1,1}$ which is a representable map. For example, at $P \in E_1''' \cap E_3$, the induced map on the stabilizer groups is defined by
    \begin{align*}
        \mu_6 \times \mu_2 \to \mu_6, \quad (\zeta_6,1) \mapsto \zeta_6,\,\, (1,\zeta_2) \mapsto \zeta_6^3 = -1
    \end{align*}
    So the kernel is $\mu_2$, generated by $(\zeta_6^3,\zeta_2)$, and we have an isomorphism
    \begin{align*}
        \mu_6 \times \mu_2 / \mu_2 \simeq \mu_6
    \end{align*}
    where the quotient $\mu_6 \times \mu_2 / \mu_2$ is generated by the image of $(\zeta_6,1)$. Over an open neighborhood $\cU \subseteq \cX$ of $P$, we have $E_1'' = \{s=0\}$ and $E_3=\{q=0\}$ so that
    \begin{align*}
        \cU = [(\Spec \cO_{\cU}[z,w]/(z^6-s,w^2-q))/\mu_6 \times \mu_2]
    \end{align*}
    Then, the relative coarse moduli space is defined by
    \begin{align*}
        \cU' &= [(\Spec \cO_{\cU}[z,w]/(z^6 -s,w^2-q))/\mu_2) / (\mu_6\times \mu_2/\mu_2)] \\
        &\simeq [(\Spec \cO_{\cU}[A,B,C]/(A^3 -s, AC - B^2, C-q) )/ \mu_6] 
    \end{align*}
    where $A = z^2$, $B = zw$, and $C = w^2$. Similarly, we can locally construct the relative coarse space and glue them together to the global relative coarse space $\cX'$ and a representable morphism $\cX' \to \ov{\cM}_{1,1}$.
    \end{example}
    
	\bibliographystyle{alpha}
	\bibliography{reference}
	
\end{document}